\newcommand{\R}{\mathbb{R}}
\newcommand{\N}{\mathbb{N}}
\newcommand{\Z}{\mathbb{Z}}
\newcommand{\dd}{\mathrm{d}}
\newcommand{\E}{\mathbf{E}}
\newcommand{\p}{\mathbf{P}}
\DeclareMathOperator{\cov}{Cov}
\DeclareMathOperator{\var}{Var}
\newcommand{\toi}{\to\infty}
\newcommand{\eind}{\stackrel{d}{=}}
\newcommand\ind[1]{\mathbbm{1}{\left\{#1\right\}}}
\newcommand{\dto}{\stackrel{d}{\longrightarrow}}
\newcommand{\lo}{\tilde{l}_0}
\newcommand{\vep}{{\varepsilon}}
\newcommand{\dsum}{\displaystyle\sum}
\newcommand{\dint}{\displaystyle\int}
\theoremstyle{plain}
\newtheorem{lemma}{Lemma}
\newtheorem{theorem}{Theorem}
\newtheorem*{theorem*}{Theorem}
\newtheorem{corollary}{Corollary}
\theoremstyle{remark}
\newtheorem{remark}{Remark}
\newtheorem*{example}{Example}
\title{Limit Theorems for Branching Processes with \\ 
Immigration in a Random Environment}
\author{Bojan Basrak\thanks{Department of Mathematics, Faculty of Science,
University of Zagreb, Bijeni\v{c}ka 30, 10000 Zagreb, Croatia;
e-mail: \texttt{bbasrak@math.hr}}
\quad and \quad 
P\'eter Kevei\thanks{Bolyai Institute, University of Szeged, 
Aradi v\'ertan\'uk tere 1, 6720 Szeged, Hungary; 
e-mail: \texttt{kevei@math.u-szeged.hu}}}
\begin{document}

\maketitle

\begin{abstract}
We investigate subcritical Galton--Watson branching processes with immigration
in a random environment. Using Goldie's implicit renewal theory we show that under
general Cram\'er condition the stationary distribution has a power law tail.
We determine the tail process of the stationary Markov chain, prove 
point process convergence, and convergence of the partial sums.
The original motivation comes from Kesten, Kozlov and Spitzer seminal 1975 paper,
which connects a random walk in a random environment model to a special 
Galton--Watson process with immigration in a random environment.
We obtain new results even in this very special setting.

\noindent 
\textit{Keywords:} branching process in random environment, regularly varying
stationary sequences, tail process, implicit renewal theory\\
\noindent \textit{MSC2010:} {60J80, 60F05}
\end{abstract}

\section{Introduction and notation}

Kesten et al.~in their article \cite{KKS} on random walks in a random environment
discovered close  connections of such walks with the so-called stochastic 
recurrence equations, and with a  class of branching processes with immigration in a 
random environment. 
In this article, we aim to give a precise description of the long term behavior of such 
branching processes.
It is known that  in an i.i.d.~random environment, a branching process with 
immigration, $(X_n)$ say,  has a Markovian structure i.e.~it satisfies recursion
$ X_{n}  = \psi (X_{n-1},  Z_{n})$ for an i.i.d.~sequence of random elements $(Z_n)$. 
Although,  its evolution  seems somewhat more involved  than  the one described by the standard stochastic 
recurrence equations  (as studied in \cite{BDM} for instance), we  show in 
Section~\ref{sec:RVTails}  that  Goldie's implicit renewal theory of \cite{Goldie}  can be 
readjusted to characterize  the tails of the corresponding stationary distribution.
Next, in  Sections~\ref{sec:PointProc} and~\ref{sec:PartialSums} respectively, we  give 
detailed description of the asymptotic limits of properly normalized values $(X_i)$ for
$i=1,\ldots, n$, and of their partial sums $S_n=X_1+\cdots +X_n$, showing that the latter
can exhibit both Gaussian and non--Gaussian limits under appropriate conditions.
Note that Roitershtein~\cite{Roit07} 
showed  Gaussian asymptotics for partial sums of certain  multitype branching processes with 
immigration in a more general random environment. 
As an application of our main results, in Section~\ref{sec:RWRE}  we reconsider  Kesten 
et al.'s \cite{KKS} random walk in a random environment model with positive drift and 
present an alternative analysis of the asymptotic behavior of such a walk. We explain  how their
conditions yield various limiting distributions of such walks with the emphasis on the 
arguably more interesting non--Gaussian case.  Moreover,
we show that one can relax original conditions in \cite{KKS} somewhat, and also write out
the characteristic function of the limiting distributions. The precise form of the 
characteristic function which follows from our analysis seems to be new. Finally,  our 
method additionally yields the long term behavior of
the worst traps such a random walk encounters when moving to the right. It is already 
understood that it is exactly those traps, that is the edges visited over and over again 
as the walk  moves to the right, which give rise to the non--Gaussian limits for random walks in 
random environment.

Throughout the article, the random environment is modeled by an i.i.d. sequence  
$\xi$, $(\xi_t)_{t \in \Z}$ with values in a measurable space $\mathbb{X}$.
It may help in the sequel to specify  $\mathbb{X} = \Delta^2$, where $\Delta$
denotes the  space of probability measures on $\N= \{ 0, 1, \ldots \}$, 
in that case we may write $\xi = (\nu_{\xi},\nu^\circ_{\xi})$.
Alternatively, we assume that there exist a measurable function which maps each $\xi$ to
a pair $(\nu_{\xi},\nu^\circ_{\xi}) \in \Delta^2$.
The two components of the pair $(\nu_{\xi},\nu^\circ_{\xi})$ are called the 
offspring and the immigration distribution, respectively. The 
 Galton--Watson 
process with immigration in the random environment $\mathcal{E}=\sigma \{ 
\xi_t:t \in \Z\}$  evolves as follows. Let $X_0 = x \in \N$, and then set
\begin{equation} \label{eq:def-X}
X_{n+1} = \sum_{i=1}^{X_n} A_i^{(n+1)} + B_{n+1}
=: \theta_{n+1} \circ X_n + B_{n+1}, \quad n \geq 0, 
\end{equation}
where we assume that, conditioned on the environment $\mathcal{E}$, the variables 
$\{ A_i^{(n)}, B_n : \,  n \in \Z\,, i\geq 1 \}$ are  independent, moreover, 
for $n$ fixed, 
$(A_i^{(n)})_{i \geq 1}$ are i.i.d.~with distribution $\nu_{\xi_n}$, and $B_n$ has 
distribution $\nu^\circ_{\xi_n}$. We interpret the variable $A_i^{(n)}$ as the number of 
offspring 
of the $i$th element in the $(n-1)$st generation, 
and $B_n$ as the number of immigrants in the $n$th generation. Hence,
$\theta_n$ as in \eqref{eq:def-X} denote random operators on nonnegative integers which 
act as follows 
$$
\theta_{n} \circ x  = \sum_{i=1}^{x} A_i^{(n)}\,.
$$
In the sequel, we abuse the notation, by writing 
$\theta_n\circ (x+y) = \theta_{n} \circ x + \theta_{n} \circ y$, 
keeping in mind that the 
two random operators on the right hand side are not really the same.

For an $\mathbb{X}$--valued random element $\xi$  denote by
\[ 
m(\xi) = \sum_{i=1}^\infty i  \nu_{\xi} (\{i\}), \quad 
m^\circ(\xi) = \sum_{i=1}^\infty i  \nu^\circ_{\xi} (\{i\}),
\] 
the expectation of its offspring and immigration distribution.
This is clearly a nonnegative random variable, potentially equal to $+\infty$.
We assume in the sequel that the process is subcritical, i.e.
\begin{equation} \label{eq:ass-subcrit}
\E \log m(\xi) < 0, 
\end{equation}
and that the following Cram\'er's condition holds
\begin{equation} \label{eq:ass-cramer}
\E m(\xi)^\kappa = 1  \quad  
\text{for some } \kappa > 0. 
\end{equation}
Finally, to avoid trivial situations, we assume that 
$\nu_\xi^\circ$ is not concentrated at 0.

Note that by our assumptions  the  sequence of random elements $Z_n = (\xi_n, B_n, 
A^{(n)}_1,A^{(n)}_2,\ldots)$,\, ${n\in \Z}$ with values in $\mathbb{X} \times \N ^\N$
are iid.
Therefore, one can represent the evolution of the process $(X_n)$ using
a measurable mapping $\psi$ and
i.i.d.~sequence $(Z_n)$  as
\begin{equation}\label{eq:PsiX}
  X_{n+1}  = \psi (X_n,  Z_{n+1})   = \Psi_{n+1} (X_n)
  = \theta_{n+1} \circ X_n + B_{n+1}\,,
\end{equation}
emphasizing the Markovian character of the process $(X_n)$. Here  $\Psi_n$
denotes a random map $x \mapsto \psi (x,  Z_{n})$.

By iterating \eqref{eq:def-X}  backward one may expect that 
the stationary  distribution of the process can be 
found as
\begin{equation} \label{eq:def-Xstat}
X_\infty = B_0 + \theta_0 \circ B_{-1} + 
\theta_0 \circ \theta_{-1} \circ B_{-2} + \ldots 
\eind
\sum_{i=0}^\infty \theta_0 \circ \theta_1 \circ \ldots \circ \theta_{i-1}  \circ B_{i},
\end{equation}
provided that the r.h.s converges a.s.~to a finite limit 
(cf.~Lemma \ref{lemma:X-moments}).
Clearly, if well-defined, such a  distribution satisfies the distributional fixed point 
equation
\begin{equation} \label{eq:fixed}
X \eind \psi(X,Z)=  \sum_{i=1}^X A_i + B = \theta \circ X + B,
\end{equation}
where  $Z=(\xi,B, A_1,A_2,\ldots)$ and $X$ on the right-hand side are independent.

\section{Implicit renewal theory}\label{sec:RVTails}

\subsection{Moments of the stationary distribution}

One of the main steps in the analysis of the process $(X_n)$ is 
to determine whether the random variable $X_\infty$ in \eqref{eq:def-Xstat} has 
finite moments of order $\alpha>0$ say.  If this holds, $X_\infty$  would be clearly 
finite with probability one.
On the other hand, by the conditional Jensen inequality 
$\E m(\xi)^t \leq \E A^t$ for $t \geq 1$, while $\E m(\xi)^t \geq \E A^t$ for
$t \leq 1$. In particular, for any fixed $t>0$, $\E A^t = \infty$ is possible
with the assumptions
\eqref{eq:ass-subcrit} and \eqref{eq:ass-cramer} still being satisfied for some 
$\kappa>0$.

In deterministic environment for multitype processes, the
existence and explicit expression for the moments of order $\alpha$ 
were subject of Quine \cite{Quine} for $\alpha = 1,2$, and  of
Barczy et al.~\cite[Lemma 1]{BNP} for $\alpha = 3$. Under additional ergodicity conditions,
the corresponding result for multitype processes for general $\alpha > 0$
was proved   by Sz\H{u}cs \cite{szucs}. 
In our case, under the condition that $\alpha>0$
satisfies
\begin{equation} \label{eq:momCond}
\E m(\xi)^\alpha < 1\,,\quad \E A^\alpha < \infty\,, \quad 
\mbox{  and }\quad 
\E B^{\alpha} < \infty,
\end{equation}
by Lemma 3.1 in Buraczewski and Dyszewski \cite{BD} one can find
constants $c>0$ and $ 0< \varrho < 1$ such that 
\[
\E (\theta_0 \circ \theta_1 \circ \ldots \theta_{i-1} \circ B_{i})^{\alpha}
\leq c \varrho^i\,. 
\]
Therefore, for $\alpha \geq 1$ by Minkowski's inequality
 \[
(\E X_\infty^\alpha)^{1/\alpha}  \leq  \sum_{i=0}^\infty 
\left( \E (\theta_0 \circ \theta_1 \circ \ldots \theta_{i-1} \circ B_{i})^{\alpha}  
\right)^{1/\alpha}
\leq c^{1/\alpha} \sum_{i=0}^\infty \varrho^{i/\alpha} < \infty,
\] 
while for $\alpha < 1$, simply by subadditivity
\[
\E X_\infty^\alpha \leq  \sum_{i=0}^\infty 
\E (\theta_0 \circ \theta_1 \circ \ldots \theta_{i-1} \circ B_{i})^{\alpha}  
\leq c \sum_{i=0}^\infty \varrho^{i} < \infty,
\] 
which immediately yields the following useful result.

\begin{lemma} \label{lemma:X-moments}
If \eqref{eq:momCond} holds for some $\alpha>0$, then the 
random variable $X_\infty$ in \eqref{eq:def-Xstat} satisfies
$\E X_\infty^\alpha < \infty$.
\end{lemma}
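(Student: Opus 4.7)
The plan is to apply the series representation \eqref{eq:def-Xstat} term by term and to show that the $\alpha$-moments of the summands decay geometrically in $i$, so that a geometric series bounds $\E X_\infty^\alpha$. Since the discussion preceding the lemma already carries out most of this, I would formalize it as follows.

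First I would invoke Lemma 3.1 of Buraczewski and Dyszewski \cite{BD}: under the three conditions packaged in \eqref{eq:momCond}, there exist constants $c>0$ and $\varrho\in(0,1)$ with
\[
\E\bigl(\theta_0\circ\theta_1\circ\cdots\circ\theta_{i-1}\circ B_i\bigr)^{\alpha}\le c\varrho^{i}\quad\text{for all }i\ge 0.
\]
This is the analytic heart of the argument and encapsulates the interaction between the branching operator iterates and the immigration input; it is also the step where the subcritical Cram\'er-type moment control $\E m(\xi)^{\alpha}<1$ is crucial, since it forces the contraction rate $\varrho$ to be strictly less than one.

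Next I would split into the two cases dictated by whether Minkowski's inequality is available. For $\alpha\ge 1$, the triangle inequality in $L^{\alpha}$ gives
\[
(\E X_\infty^{\alpha})^{1/\alpha}\le \sum_{i=0}^{\infty}\bigl(\E(\theta_0\circ\theta_1\circ\cdots\circ\theta_{i-1}\circ B_i)^{\alpha}\bigr)^{1/\alpha}\le c^{1/\alpha}\sum_{i=0}^{\infty}\varrho^{i/\alpha},
\]
which is finite since $\varrho<1$. For $0<\alpha<1$, I would use the elementary inequality $(x+y)^{\alpha}\le x^{\alpha}+y^{\alpha}$ for nonnegative reals to get
\[
\E X_\infty^{\alpha}\le \sum_{i=0}^{\infty}\E(\theta_0\circ\theta_1\circ\cdots\circ\theta_{i-1}\circ B_i)^{\alpha}\le c\sum_{i=0}^{\infty}\varrho^{i},
\]
again finite. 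In both cases monotone convergence legitimizes interchanging $\E$ and the infinite sum, because the summands are nonnegative.

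The only genuine obstacle is the geometric bound from \cite{BD}, which is an external ingredient; once that is granted, the rest is routine application of Minkowski or subadditivity and summation of a geometric series. A minor point worth flagging explicitly in the write-up is the abuse of notation in which $\theta_n\circ(x+y)$ is identified in distribution with $\theta_n\circ x+\theta_n\circ y$, so that the random variables in \eqref{eq:def-Xstat} are indeed well-defined and measurable as pointwise sums of independent branching images of the immigration terms.
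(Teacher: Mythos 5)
Your proposal is correct and follows essentially the same route as the paper: it invokes Lemma 3.1 of Buraczewski and Dyszewski for the geometric bound on $\E(\theta_0\circ\cdots\circ\theta_{i-1}\circ B_i)^{\alpha}$, then applies Minkowski's inequality for $\alpha\ge 1$ and subadditivity for $\alpha<1$, exactly as in the discussion preceding the lemma.
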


Lemma~\ref{lemma:X-moments} implies in particular that \eqref{eq:def-Xstat} represents a 
solution of   the distributional equation in  \eqref{eq:fixed}.
The next  statement can be viewed as a slight extension of Theorem 1.5.1 in Gut 
\cite{Gut}.
Put $a \vee b = \max \{ a, b\}$.

\begin{lemma} \label{lemma:random-sum-moments2}
Let $Y$ be a nonnegative integer-valued random variable, 
$A, A_1$, $A_2$, $\ldots$ are
identically distributed random variables, independent of $Y$, such that they are 
i.i.d.~given the environment $\xi$, $\E [A | \xi ] = 0$, furthermore $Y$ is 
independent of $\xi$. 
\begin{itemize}
\item[(i)] Let $\alpha \geq 2$. Assume that $\E Y^{\alpha/2} < \infty$, and
$\E |A|^\alpha < \infty$.
Then there is a $c = c(\alpha)  >0$ depending only on $\alpha$ such that
\[
\E \left| \sum_{i=1}^Y A_i \right|^{\alpha} < 
c \, \E Y^{\alpha /2} \, \E |A|^\alpha . 
\]

\item[(ii)] Let $\alpha \in (0, 2]$, $\eta \in [0,1]$ be such that
$2 \eta \leq \alpha \leq 1 + \eta$, and $\E Y^{\alpha - \eta} < \infty$,
and
\[
\E \left( \E \left[ |A|^{\frac{\alpha}{\alpha -\eta}} 
\Big| \xi \right] \right)^{\alpha -\eta} < \infty.
\]
Then there is a $c = c(\alpha, \eta)  >0$ depending only on $\alpha$
and $\eta$ such that
\[
\E \left| \sum_{i=1}^Y A_i \right|^{\alpha} < 
c \, \E Y^{\alpha-\eta} \, 
\E \left( \E \left[ |A|^{\frac{\alpha}{\alpha -\eta}} 
\Big| \xi \right] \right)^{\alpha -\eta}.
\]

\item[(iii)] For $\alpha \in [1,2]$ assume that $\E Y < \infty$ and
$\E |A|^\alpha < \infty$. Then there is a $c = c(\alpha) > 0$ such that
\[
\E \left| \sum_{i=1}^Y A_i \right|^\alpha \leq c \E Y \E |A|^\alpha.
\]
\end{itemize}
\end{lemma}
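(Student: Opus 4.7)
The plan is to reduce each part to a classical moment inequality for sums of i.i.d.\ mean-zero random variables, applied conditionally on the environment $\xi$, and then to average out the random index $Y$. The point is that conditionally on $\xi$ the variables $A_i$ are i.i.d.\ mean-zero with a common conditional law, and $Y$ is independent of $(\xi,(A_i))$; so a bound of the form $\E[|\sum_{i=1}^{n}A_i|^\alpha\mid\xi]\le f(n)\,g(\xi)$ can first be lifted to $n=Y$ and then separated via Fubini into a product of an expectation in $Y$ and an expectation in $\xi$.

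For part (i), I would apply Rosenthal's inequality conditionally on $\xi$: for $\alpha\ge 2$,
\[
\E\!\left[\Big|\sum_{i=1}^{n} A_i\Big|^{\alpha}\,\Big|\, \xi\right]
\le C(\alpha)\Big( n\,\E[|A|^\alpha \mid \xi] + \bigl(n\,\E[A^2\mid\xi]\bigr)^{\alpha/2}\Big).
\]
Conditional Lyapunov gives $(\E[A^2\mid\xi])^{\alpha/2}\le \E[|A|^\alpha\mid\xi]$, and $n\le n^{\alpha/2}$ for $\alpha\ge 2$, so up to a constant both summands are dominated by $n^{\alpha/2}\,\E[|A|^\alpha\mid\xi]$. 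Substituting $n=Y$, taking the outer expectation, and using independence of $Y$ from $(\xi,(A_i))$ yields the claimed bound $c(\alpha)\,\E Y^{\alpha/2}\,\E|A|^\alpha$.

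For parts (ii) and (iii), I would set $p=\alpha/(\alpha-\eta)$; the hypotheses $\eta\ge 0$, $\alpha\ge 2\eta$, and $\alpha\le 1+\eta$ translate exactly into $p\in[1,2]$ and $\alpha/p=\alpha-\eta\le 1$. Conditionally on $\xi$, the von Bahr--Esseen inequality gives $\E[|\sum_{i=1}^{n} A_i|^{p}\mid\xi]\le 2n\,\E[|A|^{p}\mid\xi]$, and conditional Jensen with exponent $\alpha/p\le 1$ upgrades this to
\[
\E\!\left[\Big|\sum_{i=1}^{n} A_i\Big|^{\alpha}\,\Big|\,\xi\right]\le (2n)^{\alpha-\eta}\bigl(\E[|A|^{p}\mid\xi]\bigr)^{\alpha-\eta}.
\]
Integrating against $(Y,\xi)$ gives (ii). Part (iii) is then the specialization $\eta=\alpha-1$, which lies in $[0,1]$ precisely when $\alpha\in[1,2]$ and for which $p=\alpha$, $\alpha-\eta=1$, collapsing the bound to $c(\alpha)\,\E Y\,\E|A|^\alpha$.

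I do not foresee a conceptual obstacle: the whole argument is algebraic bookkeeping of exponents together with Rosenthal's and von Bahr--Esseen's inequalities applied conditionally. The only care needed is to verify that the admissible ranges of $\alpha$ and $\eta$ are exactly tailored to the hypotheses of those inequalities ($\alpha\ge 2$ for Rosenthal, $p\in[1,2]$ for von Bahr--Esseen, $\alpha/p\le 1$ for conditional Jensen), and that the resulting constants depend on $\alpha$ and $\eta$ alone, as the statement requires.
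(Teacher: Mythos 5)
Your proof is correct and follows essentially the same strategy as the paper: condition on the environment $\xi$, invoke a classical moment inequality for i.i.d.\ centered sums, and then integrate out the independent index $Y$. For part (i) your expansion of Rosenthal's inequality (conditional Lyapunov to absorb the second-moment term, then $n\le n^{\alpha/2}$) is exactly what is hidden behind the paper's one-line bound. The only departure is in part (ii): the paper first applies conditional Jensen (valid since $\alpha-\eta\le 1$) to reduce the exponent to $p=\alpha/(\alpha-\eta)$, then Marcinkiewicz--Zygmund at exponent $p$, and finally subadditivity of $x\mapsto x^{p/2}$ (valid since $p\le 2$, i.e.\ $2\eta\le\alpha$), whereas you invoke von Bahr--Esseen at exponent $p$ and then apply Jensen; since von Bahr--Esseen is itself a consequence of Marcinkiewicz--Zygmund plus subadditivity, the two arguments are equivalent in content, and your bookkeeping of the exponent ranges ($p\in[1,2]$ from $\eta\ge 0$ and $2\eta\le\alpha$; $\alpha/p\le 1$ from $\alpha\le 1+\eta$) matches the paper's. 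Part (iii) via $\eta=\alpha-1$ is identical to the paper.
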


\begin{proof}
(i)
By Rosenthal's inequality \cite[Theorem 2.10]{Petrov}
\begin{equation} \label{eq:auxa1}
\begin{split}
\E \Big| \sum_{i=1}^n A_i \Big|^{\alpha} 
 =
\E \bigg( 
\E \bigg[ \Big| \sum_{i=1}^n A_i \Big|^{\alpha} \bigg |\, \xi \bigg] 
\bigg) 
 \leq c \, n^{\alpha/2} \, \E |A|^{\alpha}.
\end{split}
\end{equation}
From inequality \eqref{eq:auxa1} 
the statement follows 
easily, as
\[
\begin{split}
\E \Big| \sum_{i=1}^Y A_i \Big|^{\alpha } 
& = \sum_{n=1}^\infty \p (Y = n ) \, \E \Big| \sum_{i=1}^n A_i \Big|^{\alpha} \\
& \leq c \, \E |A|^{\alpha} \, \sum_{n=1}^\infty \p (Y = n ) \; n^{\alpha/2} \\
& \leq c \, \E |A|^{\alpha} \, \E Y^{\alpha/2} < \infty.
\end{split}
\]

(ii) First using the conditional Jensen inequality 
($\alpha - \eta \leq 1$), then the Marcinkiewicz--Zygmund inequality
\cite[2.6.18]{Petrov},
and finally the subadditivity ($\alpha / [2 (\alpha -\eta)] \leq 1$)
we obtain
\[
\begin{split}
\E \Big| \sum_{i=1}^Y A_i \Big|^\alpha & =
\E \bigg(
\E \bigg[ \Big| \sum_{i=1}^Y A_i \Big|^\alpha \, \bigg|\, \xi , Y \bigg] 
\bigg) \\
& \leq 
\E \bigg(
\E \bigg[ \Big| \sum_{i=1}^Y A_i \Big|^{\frac{\alpha}{\alpha -\eta}}
\, \bigg| \, 
\xi , Y \bigg] 
\bigg)^{\alpha - \eta} \\
& \leq 
c \, \E \bigg(
\E \bigg[ \Big( \sum_{i=1}^Y A_i^2 \Big)^{\frac{\alpha}{2(\alpha -\eta)}}
\, \bigg|\,  \xi , Y \bigg] 
\bigg)^{\alpha - \eta} \\
& \leq c\,  \E Y^{\alpha - \eta} \,  
\E \left( \E \left[ |A|^{\frac{\alpha}{\alpha -\eta}} 
\, \Big| \xi \right] \right)^{\alpha -\eta}.
\end{split}
\]

(iii)  This follows from (ii) choosing $\eta = \alpha -1$.
\end{proof}

\subsection{Goldie's condition}

In \eqref{eq:PsiX} we described the evolution of the Markov chain $(X_n)$ using
an i.i.d.~sequence of random functions $(\Psi_n)$, 
$\Psi_n: \Omega \times \N \to \N$, $n \in \Z$, having the following general form
\[ 
\Psi_n( k ) = \sum_{i=1}^k A_i^{(n)} + B_n, \quad k \in \N.
\] 
Clearly, distributional  fixed point equation in \eqref{eq:fixed} can be written as
\begin{equation} \label{eq:fixed2}
X \eind \Psi(X),
\end{equation}
with $\Psi$ and $X$  independent on the right-hand side.

\begin{lemma} \label{lemma:Goldie-cond}
Assume that there exist $ \kappa > 0$  such that 
$ \E m(\xi)^\kappa = 1$, $\E A^\kappa < \infty$,
$\E B^{\kappa} < \infty$.
Then the law  in \eqref{eq:def-Xstat} represents the unique
stationary distribution for
the Markov chain $(X_n)$.
Suppose further that at least one of the following three conditions holds
\begin{itemize}
\item[(i)]  $\kappa > 1$  and $\E \left( m(\xi)^{\kappa-1} (\E [ A^2 | \xi ])^{1/2} 
\right) < \infty$;
\item[(ii)] $\kappa > 1$  and there exists  $\delta > 0$ such that 
$\E A^{ \kappa  + \delta} < \infty$;
\item[(iii)]  $\kappa \leq 1$ and there exists $\eta \in (0,\kappa/2]$
such that
\[
\E \left( \E \left[ A^{\frac{\kappa}{\kappa -\eta}} 
\,\Big|\, \xi \right] \right)^{\kappa -\eta} < \infty.
\]
\end{itemize}
Then the random variable  $X=X_\infty$  further satisfies
\[
\E \left| \Psi(X)^\kappa - (m(\xi) X)^\kappa \right| < \infty. 
\]
\end{lemma}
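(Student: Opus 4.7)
The plan is to write $\Psi(X)-m(\xi)X = D+B$ with $D := \theta\circ X-m(\xi)X = \sum_{i=1}^{X}(A_i-m(\xi))$, so that, conditionally on $(\xi,X)$, the summands of $D$ are i.i.d.\ and mean zero, $B$ is independent of $D$, and $X$ itself is independent of the innovation $(\xi,B,(A_i))$. The argument then splits on whether $\kappa\le 1$ or $\kappa>1$.

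For case (iii) with $\kappa\le 1$, subadditivity $|a^\kappa-b^\kappa|\le|a-b|^\kappa$ immediately gives $\E|\Psi(X)^\kappa-(m(\xi)X)^\kappa|\le\E|D|^\kappa+\E B^\kappa$. The second term is finite by hypothesis, and the first is estimated by Lemma~\ref{lemma:random-sum-moments2}(ii) with $\alpha=\kappa$ and the prescribed $\eta\in(0,\kappa/2]$ (the constraints $2\eta\le\kappa\le 1+\eta$ hold because $\kappa\le 1$), after replacing the conditional moment of $|A-m(\xi)|$ by that of $A$ via $|A-m(\xi)|\le A+m(\xi)$. Finiteness of $\E X^{\kappa-\eta}$ follows from Lemma~\ref{lemma:X-moments}, because $\alpha\mapsto\log\E m(\xi)^\alpha$ is convex, vanishes at $0$ and at $\kappa$, and has negative derivative at $0$ by \eqref{eq:ass-subcrit}, whence $\E m(\xi)^{\kappa-\eta}<1$.

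For cases (i) and (ii) with $\kappa>1$, I use $|a^\kappa-b^\kappa|\le\kappa(a\vee b)^{\kappa-1}|a-b|$ together with $a\vee b\le m(\xi)X+|D|+B$ and a power-mean inequality to reduce the problem to finiteness of six terms: $\E[(m(\xi)X)^{\kappa-1}|D|]$, $\E[(m(\xi)X)^{\kappa-1}B]$, $\E|D|^\kappa$, $\E[|D|^{\kappa-1}B]$, $\E[B^{\kappa-1}|D|]$, and $\E B^\kappa$. All but the first are routine from Hölder, Lemma~\ref{lemma:random-sum-moments2}(i) (for $\kappa\ge 2$) or (iii) (for $\kappa\in(1,2)$) applied to $|D|$, the independence of $X$ from $(\xi,B)$, and the facts $\E m(\xi)^\kappa=1$ and $\E X^\alpha<\infty$ for every $\alpha<\kappa$. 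The delicate first term is treated in two sub-cases. Under (i), condition on $(\xi,X)$ and apply Cauchy--Schwarz to obtain $\E[|D|\mid\xi,X]\le X^{1/2}(\E[A^2\mid\xi])^{1/2}$; independence of $X$ from $(\xi,A)$ then factorises the bound into $\E\bigl[m(\xi)^{\kappa-1}(\E[A^2\mid\xi])^{1/2}\bigr]\cdot\E X^{\kappa-1/2}$, with both factors finite by the hypothesis of (i) and Lemma~\ref{lemma:X-moments}. Under (ii) (really needed only when $\kappa\in(1,2)$, for if $\kappa\ge 2$ the standing $\E A^\kappa<\infty$ already yields (i) by Jensen on $(\E[A^2\mid\xi])^{1/2}$ followed by Hölder against $m(\xi)^{\kappa-1}$), replace Cauchy--Schwarz by Hölder with conjugate exponents $(p,q)$ and $q\in(\kappa,\min\{\kappa+\delta,2\}]$: then $(\kappa-1)p<\kappa$ makes the $m(\xi)X$-factor finite, while $\E|D|^q<\infty$ by Lemma~\ref{lemma:random-sum-moments2}(iii), using $\E m(\xi)^q\le\E A^q\le\E A^{\kappa+\delta}<\infty$ obtained again via Jensen.

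The main obstacle is the deliberate shortage of $\kappa$-th moments of $X$: the quantity $\E X^\kappa$ is exactly what cannot be used, since it is expected to be infinite in the regime of interest ($X$ has a power-law tail of index $\kappa$). Every estimate must therefore save a strictly positive power of $X$ and compensate by a slightly stronger-than-$\kappa$ conditional moment of the innovation variables, and conditions (i), (ii), (iii) are precisely tailored to provide this trade-off in their respective moment regimes.
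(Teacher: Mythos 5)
Your argument for the moment bound $\E|\Psi(X)^\kappa - (m(\xi)X)^\kappa| < \infty$ follows essentially the same route as the paper: the same decomposition $\Psi(X) - m(\xi)X = D + B$ with $D = \sum_{i=1}^{X}(A_i - m(\xi))$, the same subadditivity estimate for $\kappa \leq 1$ invoking Lemma~\ref{lemma:random-sum-moments2}(ii), and the same Marcinkiewicz--Zygmund/Cauchy--Schwarz bound for the key cross term $\E[(m(\xi)X)^{\kappa-1}|D|]$ under (i), with a H\"older variant under (ii). The only material algebraic difference is the initial inequality: the paper uses $|x^\kappa - y^\kappa| \leq c|x-y|\bigl(y^{\kappa-1} + |x-y|^{\kappa-1}\bigr)$ with $y = m(\xi)X$, which yields four terms, whereas your mean-value bound $|a^\kappa - b^\kappa| \leq \kappa(a\vee b)^{\kappa-1}|a-b|$ followed by $a\vee b \leq m(\xi)X + |D| + B$ and power-mean yields six; the two extra cross terms $\E[|D|^{\kappa-1}B]$ and $\E[B^{\kappa-1}|D|]$ are harmless (H\"older, using that $D$ and $B$ are conditionally independent given $\xi$), just slightly more to check. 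Your observation that condition (ii) is genuinely needed only for $\kappa \in (1,2)$, because the standing hypothesis $\E A^\kappa < \infty$ already yields (i) when $\kappa \geq 2$ via conditional Jensen on $(\E[A^2\mid\xi])^{1/2}$ followed by H\"older against $m(\xi)^{\kappa-1}$, is correct and a useful simplification; the paper instead treats $\kappa \geq 2$ under (ii) directly via Lemma~\ref{lemma:random-sum-moments2}(i) with exponent $p = \kappa+\varepsilon$.

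One genuine omission: the lemma's first assertion, that the law in \eqref{eq:def-Xstat} is the \emph{unique} stationary distribution of $(X_n)$, is not addressed in your write-up. Existence follows from Lemma~\ref{lemma:X-moments}, which you use implicitly, but uniqueness needs an irreducibility argument. The paper supplies it: by subcriticality, $\p(A_1 = 0) > 0$, so $d_0 = \min\{k : \p(A_1 = 0,\, B_1 = k) > 0\}$ is well defined and is an accessible atom for the chain, hence the chain is irreducible and the stationary distribution unique (Douc et al.~\cite[Theorem 7.2.1]{DMS18}). You should include such a step to complete the proof of the full statement.
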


\begin{proof}
It is straightforward to see that the assumptions of the lemma imply \eqref{eq:momCond} 
for any $ \alpha \in (0, \kappa )$,
which, by Lemma \ref{lemma:X-moments} further proves that the Markov
chain in \eqref{eq:def-X} has a stationary 
distribution. Denote by
$$
d_0 =\min \{  k : \p ( A_1 = 0\,, B_1= k) >0 \}\,.
$$
The nonnegative integer $d_0$  is well defined since $\p(A_1=0) >0$ 
by the subcriticality assumption \eqref{eq:ass-subcrit}. Moreover, it
represents an accessible atom for the Markov chain $(X_n)$, this makes the chain 
irreducible, and the stationary distribution unique, see 
Douc et al.~\cite[Theorem 7.2.1]{DMS18} for instance. 

\smallskip

First consider the $\kappa > 1$ case.
We use that for any $\alpha \geq 1$ for some $c = c_\alpha > 0$
\[
\left| x^\alpha - y^\alpha \right| \leq 
c |x-y| \, ( y^{\alpha -1} + |x - y|^{\alpha -1} ). 
\]
Therefore
\begin{equation} \label{eq:aux1}
\begin{split}
& \E \left| \Psi(X)^\kappa - (m(\xi) X)^\kappa \right| \\
& \leq c \left( 
\E |\Psi(X) - m(\xi) X| (m(\xi) X)^{\kappa -1}  
+ \E |\Psi(X) - m(\xi) X|^\kappa   \right).
\end{split}
\end{equation}
For the second term in \eqref{eq:aux1} by Minkowski's inequality
\begin{equation} \label{eq:diff1}
\left( \E | \Psi(X) - m(\xi) X |^\kappa \right)^{1/\kappa} \leq 
\bigg( \E \Big| \sum_{i=1}^X (A_i - m(\xi)) \Big|^\kappa \bigg)^{1/\kappa} +
\left( \E B^\kappa \right)^{1/\kappa}.
\end{equation}
The second term is finite according to our assumptions. 
The finiteness of the first term in \eqref{eq:diff1} follows from Lemmas
\ref{lemma:X-moments} and \ref{lemma:random-sum-moments2}. Indeed, 
for $\kappa \geq 2$  Lemma \ref{lemma:random-sum-moments2} (i), 
for $\kappa \in (1,2)$  Lemma \ref{lemma:random-sum-moments2} (iii)
applies.

For the first term in \eqref{eq:aux1} we have
\begin{equation} \label{eq:aux2}
\begin{split}
& \E \left( |\Psi(X) - m(\xi) X| (m(\xi) X)^{\kappa -1} \right) \\
& \leq 
\E \bigg( 
\Big| \sum_{i=1}^X (A_i - m(\xi)) \Big| \, (m(\xi) X)^{\kappa -1} \bigg)
+ \E B (m(\xi) X)^{\kappa -1}.
\end{split}
\end{equation}
By independence and H\"older's inequality
\[ 
\begin{split}
\E B (m(\xi) X)^{\kappa -1} & = \E X^{\kappa -1} \E B m(\xi)^{\kappa -1} \\
& \leq \E X^{\kappa -1} (\E B^\kappa)^{1/\kappa} 
(\E m(\xi)^\kappa)^{(\kappa -1)/\kappa}
< \infty,
\end{split}
\]
so the second term in \eqref{eq:aux2} is finite.
Therefore, it only remains to show the finiteness of the first term 
in \eqref{eq:aux2}.

Assume (i). 
Applying first the Marcinkiewicz--Zygmund inequality and then 
Jensen's inequality
\[
\begin{split}
\E \bigg[ \Big| \sum_{i=1}^n (A_i - m(\xi)) \Big| \, \Big| \, \xi \bigg] 
& \leq 
c \,  \E \bigg[ \Big( 
\sum_{i=1}^n (A_i - m(\xi))^2 \Big)^{1/2}  \, \Big| \, \xi \bigg] \\
& \leq c \, 
\bigg( \E \Big[  \sum_{i=1}^n (A_i - m(\xi))^2   \, \Big| \, \xi \Big]
\bigg)^{1/2} \\
& \leq c \, n^{1/2} \, (\E [ A^2 | \xi])^{1/2}.
\end{split}
\]
Substituting back into the first term in \eqref{eq:aux2}
\[ 
\begin{split}
& \E \bigg( 
\Big| \sum_{i=1}^X (A_i - m(\xi)) \Big| \, (m(\xi) X)^{\kappa -1} \bigg)
\\
& \leq 
c \E X^{\kappa -1/2} \E \left( m(\xi)^{\kappa-1} (\E [ A^2 | \xi])^{1/2} \right),
\end{split}
\] 
which is finite whenever (i) holds.

Assume (ii). 
For the first term in \eqref{eq:aux2} by from H\"older's 
inequality we have
\begin{equation} \label{eq:aux4}
\begin{split}
& \E \bigg( 
\Big| \sum_{i=1}^X (A_i - m(\xi)) \Big| 
(m(\xi) X)^{\kappa -1} \bigg) \\
& \leq \bigg( \E \Big| \sum_{i=1}^X (A_i - m(\xi)) \Big|^{p} \bigg)^{1/p}
\left( \E (m(\xi) X)^{q(\kappa -1)} \right)^{1/q},
\end{split}
\end{equation}
with $1/p + 1/q =1$. Choose $p = \kappa + \varepsilon$, for some 
$0 < \varepsilon < \delta$, with $\delta > 0$ given in the condition (ii).
Then easy computation shows that 
$q = \kappa / (\kappa -1) - \varepsilon'$, where $\varepsilon' \downarrow 0$ as 
$\varepsilon \downarrow 0$.
Since $q (\kappa -1) < \kappa$, the second factor is finite by the independence 
of $X$ and $\xi$, and by Lemma \ref{lemma:X-moments}.
The finiteness of the first factor in \eqref{eq:aux4} follows from Lemmas
\ref{lemma:X-moments} and \ref{lemma:random-sum-moments2}. Indeed, 
for $\kappa \geq 2$ this is immediate. For $\kappa \in (1,2)$ choose
$p = \kappa + \varepsilon \leq 2$ and apply Lemma 
\ref{lemma:random-sum-moments2} (iii).

\smallskip 
The case $\kappa \leq 1$ is simpler. By the inequality
\[
\left| x^\alpha - y^\alpha \right| \leq 
|x-y|^\alpha, 
\]
we have
\[
\left| \Psi(X)^\kappa - (m(\xi) X)^\kappa \right|
\leq 
|\Psi(X) - m(\xi) X|^\kappa. 
\]
Thus by subadditivity 
\[
\E \left| \Psi(X)^\kappa - (m(\xi) X)^{\kappa} \right| 
\leq 
\E \left| \sum_{i=1}^X (A_i - m(\xi)) \right|^\kappa 
+ \E B^\kappa.
\]
Since the second term is finite by assumption, 
it is enough to show that
\[
\E \left| \sum_{i=1}^X (A_i - m(\xi)) \right|^\kappa < \infty.
\]
This follows from Lemma \ref{lemma:random-sum-moments2} (ii),
with $\eta$ given in condition (iii).
\end{proof}

\begin{remark} \label{rem:PoiGeo}
For special classes of offspring distributions the conditions of the lemma can 
be simplified. If, 
\begin{equation} \label{eq:spec}
\E [ A^2 | \xi ] \leq c (m(\xi)^2 + 1) \quad \text{a.s.~for some $c > 1$},
\end{equation}
then both the condition for $\kappa \leq 1$ and condition (i) reduces to
$\E m(\xi)^\kappa < \infty$, which holds since $\E m(\xi)^\kappa = 1$.

In particular if, conditionally on $\xi$, $A$ has Poisson distribution 
with parameter $\lambda(\xi)> 0$, then $m(\xi)  = \E [ A | \xi ]  = \lambda(\xi)$ and 
$\E [ A^2 | \xi ] = \lambda(\xi)^2 + \lambda(\xi) \leq$ $2 ( m(\xi)^2 + 1)$.
While if $A$, conditionally on $\xi$,  has geometric distribution with 
parameter $p(\xi) \in (0,1)$, 
i.e.~$\p(A = k \mid \xi ) = (1-p(\xi))^k p(\xi)$, $k \geq 0$, then
$m(\xi)  = \E [ A | \xi ] = (1-p(\xi))/p(\xi)$ and 
$\E [ A^2 | \xi ] =(3-2p(\xi)+p(\xi)^2)/p(\xi)^2$ 
$\leq$ $3 (1-p(\xi))^2 / p(\xi)^2 + 3 
= 3 m(\xi) ^2 + 3$. Thus, in both cases \eqref{eq:spec} holds.
\end{remark}

From Goldie's Corollary 2.4 \cite{Goldie} we obtain the following.

\begin{theorem} \label{thm:nonarith}
Assume the conditions of Lemma \ref{lemma:Goldie-cond},
$\E m(\xi)^\kappa \log m(\xi) < \infty$,  and that
the law of $\log m(\xi)$ given $m(\xi) > 0$ is nonarithmetic. Then
the law of $X_\infty$ in \eqref{eq:def-Xstat} represents the unique
stationary distribution for
the Markov chain $(X_n)$ and 
 \begin{equation} \label{eq:Tail}
 \p ( X_\infty > x ) \sim C x^{-\kappa} \quad \text{as } x \to \infty,
 \end{equation}
where
\[
C = \frac{1}{\kappa \E m(\xi)^\kappa \log m(\xi)} \E \left[ 
\Psi(X_\infty)^\kappa - 
m(\xi)^\kappa X_\infty^\kappa \right]  > 0.
\]
\end{theorem}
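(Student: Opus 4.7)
The plan is to apply Goldie's implicit renewal theorem (Corollary~2.4 in \cite{Goldie}) to the distributional equation $X_\infty \stackrel{d}{=} \Psi(X_\infty)$ of \eqref{eq:fixed}, with $M := m(\xi)$ as the multiplier that plays the role of Goldie's $M$. The required hypotheses are: the Cram\'er relation $\E M^\kappa = 1$ and the nonarithmeticity of $\log M$ conditional on $M > 0$ are assumed; the log-moment $\E M^\kappa \log M < \infty$ is assumed, and $\E M^\kappa \log^- M < \infty$ follows from $\E \log M > -\infty$ (ensured by subcriticality); and the key integrability
\[
\E \left| \Psi(X_\infty)^\kappa - (M X_\infty)^\kappa \right| < \infty
\]
is exactly the conclusion of Lemma~\ref{lemma:Goldie-cond}. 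Existence and uniqueness of the stationary law were also established there. Goldie's theorem then yields $\p(X_\infty > x) \sim C x^{-\kappa}$ with precisely the formula for $C$ stated in the theorem.

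For the positivity of $C$, I would first argue that the denominator $\kappa \, \E m(\xi)^\kappa \log m(\xi)$ is strictly positive. The function $\phi(t) := \E m(\xi)^t$ is convex on its domain of finiteness, with $\phi(0) = \phi(\kappa) = 1$ and $\phi'(0) = \E \log m(\xi) < 0$ by \eqref{eq:ass-subcrit}. Moreover $m(\xi)$ cannot be a.s.\ constant (subcriticality and \eqref{eq:ass-cramer} would be incompatible), so $\phi$ is strictly convex, and consequently $\phi'(\kappa) = \E m(\xi)^\kappa \log m(\xi) > 0$.

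The main obstacle is positivity of the numerator, i.e.\ ruling out $C = 0$. For $\kappa \geq 1$, conditional Jensen gives $\E[(\theta \circ X_\infty)^\kappa \mid \xi, X_\infty] \geq (m(\xi) X_\infty)^\kappa$, so $\E (\theta \circ X_\infty)^\kappa \geq \E (m(\xi) X_\infty)^\kappa$; since $\Psi(X_\infty)^\kappa = (\theta \circ X_\infty + B)^\kappa > (\theta \circ X_\infty)^\kappa$ strictly on $\{B > 0\}$, an event of positive probability by the assumption that $\nu_\xi^\circ$ is not concentrated at $0$, we conclude $\E \Psi(X_\infty)^\kappa > \E (m(\xi) X_\infty)^\kappa$. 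For $\kappa < 1$ this direct comparison fails because Jensen reverses, and I would supplement Goldie's theorem with a direct lower bound on the tail: using the backward representation \eqref{eq:def-Xstat}, by conditioning on a single immigrant $B_{-n} \geq 1$ followed by $n$ favorable environments along which $m(\xi_0) \cdots m(\xi_{-n+1})$ exceeds $x$, classical Cram\'er-type large deviations for products of i.i.d.\ factors satisfying $\E M^\kappa = 1$ give $\p(X_\infty > x) \geq c_0 x^{-\kappa}$ for large $x$, which together with $C \geq 0$ (automatic from the limit of nonnegative quantities) forces $C > 0$.
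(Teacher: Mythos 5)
Your overall route is the paper's: reduce to Goldie's Corollary 2.4 via Lemma \ref{lemma:Goldie-cond}, which supplies both the unique stationary law and the key integrability $\E|\Psi(X_\infty)^\kappa-(m(\xi)X_\infty)^\kappa|<\infty$, and the strict convexity argument for $\E m(\xi)^\kappa\log m(\xi)>0$ is fine. Your Jensen argument for the numerator when $\kappa\ge 1$ is also essentially correct (it is the argument the paper uses for the analogous constant in Theorem \ref{thm:nonarith2}), with one caveat: under \eqref{eq:ass-cramer} the quantities $\E\Psi(X_\infty)^\kappa$ and $\E(m(\xi)X_\infty)^\kappa$ are typically infinite, so you may not subtract two expectations; you must condition on $\{X_\infty=n\}$ and work with the finite differences $\E[(\sum_{i\le n}A_i+B)^\kappa-m(\xi)^\kappa n^\kappa]$ term by term, using the absolute integrability from Lemma \ref{lemma:Goldie-cond} to justify the decomposition.

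The genuine gap is the case $\kappa<1$. Conditioning on one immigrant followed by $n$ ``favorable'' environments only controls the \emph{conditional mean} $m(\xi_0)\cdots m(\xi_{-n+1})$ of that immigrant's progeny: Cram\'er-type large deviations under $\E m(\xi)^\kappa=1$ give $\p(\sup_n m(\xi_0)\cdots m(\xi_{-n+1})>x)\asymp x^{-\kappa}$, but they say nothing about the actual population size, which, given such an atypical environment, may well die out or stay far below its conditional expectation; a subcritical BPRE conditioned on a large product of means survives to size $x$ only with a probability that must itself be estimated. This is exactly the hard step, and your sketch does not address it. The paper closes it by coupling $(X_n)$ with a subcritical branching process $(Y_n)$ without immigration in the same environment, invoking Afanasyev's theorem, $\lim_{x\to\infty}x^\kappa\,\p(\sup_{n\ge1}Y_n>x)=c>0$, and then transferring this to the stationary law through the regeneration (Kac) formula $\p(X_\infty>x)=\frac{1}{\E_d\tau_d}\E_d\big[\sum_{i=0}^{\tau_d-1}\ind{X_i>x}\big]\ge\frac{1}{\E_d\tau_d}\p(\sup_{n\ge1}Y_n>x)$, which gives $\liminf_x x^\kappa\p(X_\infty>x)>0$ for \emph{all} $\kappa>0$ at once. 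Without an input of Afanasyev type (or an equivalent quantitative control, e.g.\ a Paley--Zygmund/second-moment bound on the population conditionally on the tilted environment), your claimed bound $\p(X_\infty>x)\ge c_0x^{-\kappa}$ is unproven, and positivity of $C$ for $\kappa<1$ remains open in your argument.
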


\begin{proof}
According to Lemma~\ref{lemma:Goldie-cond},
the Markov chain $(X_n)$ has unique stationary distribution given in  
\eqref{eq:def-Xstat}.
Because the law of the immigrant distribution is not concentrated at 0, this
distribution is not trivial and therefore  there exists  
$d  =\min \{  k>0 : \p ( X_\infty= k) >0 \} >0$\,.
The state $d$ is necessarily positive recurrent, 
i.e.~$\E_d(\tau_d) < \infty$ where
$\tau_d = \min \{ n \geq 1: X_n = d \}$ denotes the return time to the
state $d$.

From Lemma~\ref{lemma:Goldie-cond}, we can conclude that the conditions
of Corollary 2.4 in 
\cite{Goldie} hold which yields the tail asymptotics in \eqref{eq:Tail}.
It remains to show the strict positivity of the constant $C$ above. 
One can deduce this from Afanasyev's Theorem 1 \cite{Afan2001} as
follows.

To the original process $X_n$, one can couple a process $Y_n$, starting
at $Y_0 = 1$, and satisfying
\[
Y_{n+1} = \sum_{i=1}^{Y_n } A_i^{(n+1)}, \quad n \geq 0,
\]
where the sequence of environments $(\xi_1, \xi_2, \ldots)$ are the same
as in \eqref{eq:def-X}. Thus,  $(Y_n)$ is a subcritical branching process
in random environment, such that by the construction $X_n \geq Y_n$
a.s.~for every $n \geq 0$ provided that we start $(X_n)$ at any state
different from 0. Theorem 1 in \cite{Afan2001}
states that 
\begin{equation} \label{eq:afan}
\lim_{x \to \infty} x^\kappa \p ( \sup_{n \geq 1} Y_n > x) = c > 0.
\end{equation}
Let $\tau_Y = \min \{ n \geq 1: Y_n = 0 \}$, clearly
$\tau_Y \leq \tau_d$. In particular, the process $Y_n$ dies out almost surely.
By the standard theory of Markov chains, see Theorem 7.2.1 in \cite{DMS18}
or  Theorem 10.4.9 in \cite{MeyTwe}, we have
\[
\begin{split}
\p ( X_\infty > x ) & = \frac{1}{\E_d \tau_d} \, 
\E_d \left[ \sum_{i=0}^{\tau_d - 1} \ind{ X_i > x}  \right] \\
& \geq \frac{1}{\E_d \tau_d} \,
\E \left[ \sum_{i=0}^{\tau_Y - 1} \ind{ Y_i > x } \,   \right] 
 \geq \frac{1}{\E_d \tau_d} \,
\p ( \sup_{n \geq 1} Y_n > x),
\end{split}
\]
which by \eqref{eq:afan} implies
\[
\liminf_{x \to \infty} x^\kappa \p (X_\infty > x) > 0,
\]
as claimed.
\end{proof}

\begin{remark}
As a consequence of the applied machinery,
apart from some special cases, the constant $C$ in the 
theorem is merely implicit; the formula contains the limit law itself.
However, for $\kappa = 1$ we simply have 
$\E [ \Psi(X_\infty) - m(\xi) X_\infty] = \E B$, so $C$ can be 
computed. 
\end{remark}

\begin{remark}
Analogous results hold in the arithmetic case.
Using Theorem 2 in \cite{Kevei2} (see 
also Theorem 3.7 by Jelenkovi\'c and Olvera-Cravioto \cite{JOC}) one can
show the  following. If the conditions of Lemma \ref{lemma:Goldie-cond} hold, and 
the law of $\log m(\xi)$ given $m(\xi) > 0$ is arithmetic with span 
$h > 0$,  then there exists a function $q$ such that 
\begin{equation*} 
\lim_{n \to \infty} x^\kappa e^{\kappa nh}  
\p ( X_\infty > x e^{ nh} ) = q(x),
\end{equation*}
whenever $x$ is a continuity point of $q$.
The function $x^{-\kappa} q(x)$ is nonincreasing, and $q(x e^h) = q(x)$ for all 
$x > 0$.
\end{remark}

\subsection{Relaxing Cram\'er's condition}

In what follows, we weaken condition \eqref{eq:ass-cramer} in two ways,
such that the tail of the stationary distribution is still regularly varying.
We use a slight extension of Goldie's renewal theory by Kevei \cite{Kevei}.

First we consider weakening the assumption
$\E m(\xi)^\kappa \log m(\xi) < \infty$.
The condition $\E m(\xi)^\kappa = 1$ ensures that 
\[ 
F_\kappa(x) = \E ( \ind{  \log m(\xi ) \leq x} m(\xi)^\kappa )
\] 
is a distribution function.
The additional logarithmic moment condition in \eqref{eq:ass-cramer} is equivalent
to the finiteness of the expectation of the distribution $F_\kappa$. This condition 
is needed to use the standard key renewal theorem. However, strong renewal
theorems in the infinite mean case have been known since the 1962 
paper by Garsia and Lamperti \cite{Garsia}. They showed that the strong renewal
theorem holds if the underlying 
distribution belongs to the domain of attraction of an $\alpha$-stable law 
with $\alpha \in (1/2,1]$, while for $\alpha \leq 1/2$ extra conditions are needed.
Recently, Caravenna and Doney \cite{Caravenna}
obtained necessary and sufficient conditions for the strong 
renewal theorem to hold, solving a 50-year old open problem.

Assume that $F_\kappa$ belongs to the domain of attraction of a stable 
law of index $\alpha \in (0,1]$, i.e., 
for some $\kappa > 0$ and $\alpha \in (0,1]$, for a slowly varying function $\ell$
\begin{equation} \label{eq:cramer-2}
\E m(\xi)^\kappa =1, \quad 
1 - F_\kappa (x) =  \frac{\ell(x)}{x^\alpha}.
\end{equation}
Define the truncated expectation as
\begin{equation} \label{eq:M-def}
M(x) = \int_0^x ( 1 - F_\kappa(x) ) \dd x \sim \frac{\ell(x) x^{1-\alpha}}{1-\alpha},
\end{equation}
where the asymptotic equality holds for $\alpha < 1$.
If $\alpha \in (0,1/2)$ further assume the Caravenna--Doney condition 
\begin{equation} \label{eq:CarDon-cond}
\lim_{\delta \to 0} \limsup_{x \to \infty} x [ 1 - F_\kappa (x) ]
\int_1^{\delta x} \frac{1}{y [1-F_\kappa(y)]^2} F_\kappa(x - \dd y) = 0.
\end{equation}

\medskip

For our second extension, 
assume now that $\E m(\xi)^\kappa = \varphi < 1$ for some $\kappa > 0$.
If $\E m(\xi)^t < \infty$ for some $t > \kappa$, then by Lemma \ref{lemma:X-moments}
the tail of $X_\infty$ cannot be regularly varying with index $\kappa$.
Therefore we assume $F_\kappa$ is heavy-tailed, i.e.~$\E m(\xi)^t = \infty$
for any $t > \kappa$. Define now the distribution function
\begin{equation} \label{eq:def-Fk2} 
F_\kappa (x) = \varphi^{-1}  
\E (\ind{\log m(\xi) \leq x} m(\xi)^\kappa ).
\end{equation}
The analysis of the stochastic fixed point equation \eqref{eq:fixed} leads to a 
defective renewal equation. To understand the asymptotic behavior of the solution
of these equations we need to introduce \emph{locally subexponential distributions}.

For $T \in (0,\infty]$ let $\nabla_T = (0, T]$ and  for a 
distribution function (df) $H$ we put
$H(x + \nabla_T) = H(x+T) - H(x)$. Let $*$ denote the usual convolution operator.
A df $H$ is \emph{locally subexponential}, $H \in \mathcal{S}_{loc}$, if
for each $T \in (0,\infty]$ we have
(i) $H(x+t+\nabla_T) \sim H(x+\nabla_T)$ as $x \to \infty$ uniformly in 
$t \in [0,1]$,
(ii) $H(x + \nabla_T) > 0$ for $x$ large enough, and
(iii) $H*H(x + \nabla_T) \sim 2 H(x + \nabla_T)$ as $x \to \infty$.
For more details see Foss, Korshunov and Zachary \cite[Section 4.7]{FKZ}.
Informally, a locally subexponential distribution is a subexponential 
distribution with well-behaved density function.

Our assumptions on $m(\xi)$ are the following:
\begin{equation} \label{eq:m-ass<1}
\begin{gathered}
\E m(\xi)^\kappa = \varphi < 1, \  \kappa > 0, \quad
F_\kappa \in \mathcal{S}_{loc}, \\
\text{for each $T \in (0,\infty]$} \ \sup_{y > x} F_\kappa(y + \nabla_T) 
= O(F_\kappa(x+\nabla_T)) \text{ as $x  \to \infty$.}
\end{gathered}
\end{equation}

\begin{theorem} \label{thm:nonarith2}
Assume that condition (i) or (iii) in Lemma \ref{lemma:Goldie-cond} holds,
the law of $\log m(\xi)$ given $m(\xi) > 0$ is nonarithmetic,
$\E B^\nu < \infty$ for some $\nu > \kappa$, and one of the following
two conditions is satisfied:
\begin{itemize}
\item[(i)] 
condition \eqref{eq:cramer-2} holds, and if $\alpha \in (0,1/2)$ also 
\eqref{eq:CarDon-cond} holds;

\item[(ii)]
condition \eqref{eq:m-ass<1} holds.
\end{itemize}
Then the law of $X_\infty$ in \eqref{eq:def-Xstat} represents the unique
stationary distribution for
the Markov chain $(X_n)$ and 
\[ 
\p ( X_\infty > x ) \sim C x^{-\kappa} L(x) \quad \text{as } x \to \infty,
\] 
where
\[
L(x) = 
\begin{cases}
(\Gamma(\alpha) \Gamma(2-\alpha) M({\log x}))^{-1}, & \text{ in case (i),} \\
(F_\kappa(1 + \log x) - F_\kappa(\log x)) \varphi / ( 1- \varphi)^2, & 
\text{ in case (ii),}
\end{cases}
\]
is a slowly varying function, and
\[
C = \frac{1}{  \kappa} 
\E \left[ \Psi(X_\infty)^\kappa - 
m(\xi)^\kappa X_\infty^\kappa \right] \geq 0,
\]
with $C > 0$ for $\kappa \geq 1$.
\end{theorem}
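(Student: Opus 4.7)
The strategy is to follow Goldie's implicit renewal approach in the form extended by Kevei~\cite{Kevei}, which accommodates both tilted distributions with infinite mean (case (i)) and defective renewal equations (case (ii)). Uniqueness and existence of the stationary law proceed exactly as in Theorem~\ref{thm:nonarith}: the assumptions of Lemma~\ref{lemma:Goldie-cond} imply \eqref{eq:momCond} for every $\alpha \in (0, \kappa)$, so by Lemma~\ref{lemma:X-moments} the series in \eqref{eq:def-Xstat} converges almost surely, and the accessible-atom argument renders the chain irreducible with unique invariant law.

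Next, set $r(t) = e^{\kappa t} \p(\log X_\infty > t)$. Exploiting the fixed point identity $X_\infty \eind \Psi(X_\infty)$ together with the independence of $(\xi, B, A_1, A_2, \ldots)$ from $X_\infty$ on the right, Goldie's algebraic manipulation yields the renewal equation
\[
r(t) = z(t) + \int_{\R} r(t-s)\, \dd F_\kappa(s),
\]
where the forcing term $z$ encodes the signed contribution of $\Psi(X_\infty)^\kappa - (m(\xi) X_\infty)^\kappa$ and satisfies
\[
\int_{\R} z(t)\, \dd t = \frac{1}{\kappa}\, \E\!\left[ \Psi(X_\infty)^\kappa - m(\xi)^\kappa X_\infty^\kappa \right],
\]
a quantity which is finite by Lemma~\ref{lemma:Goldie-cond}. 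In case (i) the step distribution $F_\kappa$ is a probability law in the domain of attraction of an $\alpha$-stable law; in case (ii) it is defective with total mass $\varphi < 1$ and, after normalization, locally subexponential.

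The analytic heart of the argument is to verify that $z$ satisfies the side conditions of the renewal theorems in~\cite{Kevei}, namely direct Riemann integrability together with an appropriate tail bound of the form $z(t) = O(1 - F_\kappa(t))$ (respectively its local analogue in case (ii)). For this one splits the contribution to $z$ arising from the immigration term $B$, whose tail is controlled via $\E B^\nu < \infty$ with $\nu > \kappa$ (yielding decay of order $e^{-(\nu-\kappa)t}$ after the $e^{\kappa t}$ weighting), from the contribution of the centered offspring sum $\sum_{i=1}^{X_\infty}(A_i - m(\xi))$, whose $\kappa$-th absolute moment is controlled by the Rosenthal and Marcinkiewicz--Zygmund estimates of Lemma~\ref{lemma:random-sum-moments2} combined with Lemma~\ref{lemma:X-moments}. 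Once these verifications are in place, Kevei's extensions of Goldie's theorem deliver $r(t) \sim C L(e^t)$ with the claimed expressions for $L$ and $C$.

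Nonnegativity $C \geq 0$ is immediate from $\p(X_\infty > x) \geq 0$ once the asymptotic is established. For the strict positivity $C > 0$ when $\kappa \geq 1$, conditional Jensen applied to the convex map $x \mapsto x^\kappa$ gives
\[
\E\!\left[ \Psi(X_\infty)^\kappa \,\big|\, \xi, X_\infty \right] \geq \bigl( m(\xi) X_\infty + \E[B \mid \xi] \bigr)^\kappa \geq (m(\xi) X_\infty)^\kappa,
\]
with strict inequality on a set of positive probability since $\nu^\circ_\xi$ is not concentrated at $0$. The principal technical obstacle is the tail control of $z$ in case (ii), where the local-subexponential majorant must be propagated through the Goldie decomposition; however, this is precisely the scenario engineered in~\cite{Kevei}, so the remaining work reduces to matching our setup to its hypotheses.
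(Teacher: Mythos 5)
Your proposal follows essentially the same route as the paper: reduce the tail asymptotics to Kevei's Theorems 2.1 and 2.3 and verify the required extra control on the defect term (the paper phrases it as $\E \big| \Psi(X_\infty)^{\kappa+\delta} - (m(\xi)X_\infty)^{\kappa+\delta} \big| < \infty$ for some $\delta>0$, checked exactly as in Lemma \ref{lemma:Goldie-cond}, cases (i) and (iii), using $\E B^\nu<\infty$), which is what your dRi-plus-tail-bound verification amounts to. Your positivity argument for $\kappa\ge 1$ (conditional Jensen retaining $\E[B\mid\xi]$, with strictness from $\nu^\circ_\xi$ not concentrated at $0$) is only a cosmetic variant of the paper's two-step argument (drop $B$ with strict inequality, then Jensen on the offspring sum), so the proof is correct and essentially identical in approach.
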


\begin{proof}
The result follows from Theorems 2.1 and 2.3 in \cite{Kevei}. We only have to
check that
\[
 \E \bigg| \Big( \sum_{i=1}^{X_\infty} A_i + B \Big)^{\kappa + \delta} - 
 (m(\xi) X_\infty)^{\kappa + \delta} \bigg| < \infty
\]
for some $\delta > 0$. This can be done exactly the same way as in the proof of
Lemma \ref{lemma:Goldie-cond} case (i) and (iii).

The strict positivity  of the constant $C$ in the theorem  can be deduced directly 
from its form. Indeed, since $X_\infty$ is 
independent of $\Psi$ and $\xi$
\begin{equation} \label{eq:c1}
\begin{split}
\E \left[ \Psi(X_\infty)^\kappa - m(\xi)^\kappa 
X_\infty^\kappa \right] 
& = \sum_{n=1}^\infty \p ( X_\infty = n ) \E \bigg[
\Big( \sum_{i=1}^n A_i + B \Big)^\kappa - m(\xi)^\kappa n^\kappa \bigg] \\
& > \sum_{n=1}^\infty \p ( X_\infty = n ) 
\E \bigg[ \Big( \sum_{i=1}^n A_i \Big)^\kappa 
- m(\xi)^\kappa n^\kappa \bigg],
\end{split}
\end{equation}
where in the last step we used that $B$ is not identically 0.
By Jensen's inequality
\[
\begin{split}
\E \left[ \left( \sum_{i=1}^n A_i \right)^\kappa - 
m(\xi)^\kappa n^\kappa \right]
& =  \E \left( \E \left[ \left( \sum_{i=1}^n A_i \right)^\kappa \bigg| \xi 
\right]
- m(\xi)^\kappa n^\kappa \right)  \\
& \geq \E \left( n^\kappa m(\xi)^\kappa - m(\xi)^\kappa n^\kappa
\right) = 0,
\end{split}
\]
from which the strict positivity follows. 
\end{proof}

\begin{remark}
Note that in general we only proved the strict positivity of $C$ above for 
$\kappa \geq 1$. However, in some special cases it is possible 
to show that $C > 0$ for a general $\kappa>0$. In particular, if 
the expectation in the infinite sum in the first line of \eqref{eq:c1}
is strictly positive for each $n$, then clearly $C > 0$.
\end{remark}

\begin{remark}
Note that both remarks after Theorem \ref{thm:nonarith} 
apply in this setup as well.
\end{remark}

\begin{remark}
The condition $F_\kappa \in \mathcal{S}_{loc}$ 
is stronger than the regular variation condition, in particular
Pareto distribution is locally subexponential. It is known that lognormal and 
Weibull distributions belong to the class $\mathcal{S}_{loc}$ as well.
In the Pareto case, i.e.~if for large enough $x$ we 
have $1 - F_\kappa(x) = c \, x^{-\beta}$, for some $c > 0, \beta > 0$,
then $\p ( X_\infty > x ) \sim c' x^{-\kappa} (\log x)^{-\beta -1}$.
In the lognormal case, when $F_\kappa(x) = \Phi(\log x)$ for $x$ large enough,
with $\Phi$ being the standard normal df, we have
$\p ( X_\infty > x ) \sim c x^{-\kappa} e^{-(\log \log x)^2/2} / \log x$, $c > 0$.
While, for Weibull tails
$1 - F_\kappa(x) = e^{-x^\beta}$, $\beta \in (0,1)$, we obtain
$\p ( X_\infty > x ) \sim c x^{-\kappa} (\log x)^{\beta-1} e^{-(\log x)^\beta}$,
$c > 0$. 
\end{remark}

\section{Asymptotic behavior of the process $(X_t)$} \label{sec:PointProc}

\subsection{Dependence structure of the process}


In the sequel we only need that our process has a stationary distribution
with regularly varying tail. In the previous section
we derived several conditions which ensures regularly varying tail.
Since in Theorem \ref{thm:nonarith2} we only have the strict positivity of the
underlying constant $C$ for $\kappa \geq 1$, we assume that one of the following
holds:
\begin{equation} \label{eq:main-ass}
\begin{gathered}
\text{conditions of Theorem \ref{thm:nonarith},} \quad 
 \text{or conditions of  Theorem \ref{thm:nonarith2} 
and $C > 0$.}
\end{gathered}
\end{equation}
Then in particular, there exist a strictly stationary Galton--Watson 
process with immigration in random environment $(X_n)_{n \in \Z}$ which satisfies 
\begin{equation} \label{eq:def-XZ}
X_{n+1} = \sum_{i=1}^{X_n} A_i^{(n+1)} + B_{n+1}, \quad n \in \Z\,,
\end{equation}
with the same interpretation of the random  variables 
$\{ A_i^{(n)}, B_n : \, n \in \Z\,, i \geq 1\}$.
Here  again by $\xi$, $(\xi_i)_{i \in \Z}$ we denote i.i.d.~random variables
representing the environment. The offspring and 
immigration distributions are governed by the environment $\mathcal{E}$
in the same way as before.

It is useful in the sequel to introduce a deterministic sequence $(a_n)_{n \in \N}$ 
such that
\begin{equation} \label{eq:def-a} 
n \p (X_\infty > a_n) \longrightarrow 1 \quad \text{as } n \to \infty.
\end{equation}
Observe that by Theorem~\ref{thm:nonarith} and \ref{thm:nonarith2},
$(a_n)$ is a regularly varying sequence with index $1/\kappa$, 
i.e.~$a_n = \widetilde \ell(n) n^{1/\kappa}$, with an appropriate 
slowly varying function $\widetilde \ell$. In particular, 
if the conditions of Theorem \ref{thm:nonarith} hold 
we may set $a_n= (Cn)^{1/\kappa}$ and that  
any other sequence $(a_n)$ in \eqref{eq:def-a} necessarily satisfies
$a_n \sim (Cn)^{1/\kappa}$.

Once we have shown that the marginal stationary distribution of the $X_t$'s is
regularly varying, it is relatively easy to prove that all the finite
dimensional distributions of $(X_t)$ in 
\eqref{eq:def-XZ} have multivariate regular variation property, 
cf.~\cite{resnick:2007}, i.e.~$(X_n)_{n \in \Z}$ is regularly varying
sequence in the sense of \cite{basrak:segers:09}. According to 
Theorem 2.1 in \cite{basrak:segers:09}, this is equivalent to the existence
of the so-called tail sequence, which is the content of the first theorem below.

Observe first that under Cram\'er's condition \eqref{eq:ass-cramer}, one can construct  
a tilted distribution of the following form
\[ 
\p (\xi ^* \in \cdot\; )  =
\E \left(  \ind{\xi \in \cdot} m(\xi)^ \kappa \right)\,.
\]
Similar change of measure appeared already in Afanasyev \cite[Lemma 1]{Afan2001}, see 
also \cite[Lemma 3.1]{BD}. 
By the convexity of the function
$\lambda(\alpha) = \E m(\xi)^\alpha$
\[
\E \log m(\xi^*) = \E m(\xi)^\kappa \log m(\xi) 
= \lambda'(\kappa) > 0,
\]
possibly infinite,
that is, the branching after the change of measure becomes  supercritical.

To simplify notation, denote $m= m (\xi),\, m_i
= m(\xi_i)$, $i \geq 1$, under their original  distribution.
Introduce further an auxiliary i.i.d.~sequence
$m^* = m(\xi^*)$, $m^*_i = m(\xi^*_i)$, $i \geq 1$, with the 
common distribution 
$\p (m^* \in A) = \E [\ind{m(\xi) \in A } m(\xi)^\kappa ]$
and independent of $(m_i)$.

On the other hand, if \eqref{eq:m-ass<1} holds, 
i.e.~$\E m(\xi)^\kappa = \varphi < 1$, then 
$m^*_i$, $i \geq 1$, is a sequence of 
\emph{extended random variables} with 
common distribution 
$\p (m^* \in A) = \E [\ind{m(\xi) \in A } m(\xi)^\kappa ]$,
for $A \subset \R$, and $\p (m^* = \infty) = 1 -\varphi$,
and independent of $(m_i)$.
In the following result we use the usual convention $1/\infty = 0$.

\begin{theorem} \label{thm:tail-proc}
Let $(X_n)_{n \in \Z}$ be a  
strictly stationary sequence satisfying \eqref{eq:def-XZ}.
Assume \eqref{eq:main-ass}, and 
let $(m_n)_{n \geq 1}$, $(m^*_n)_{n \geq 1}$ be independent 
i.i.d.~sequences introduced above and independent of $Y_0$ with Pareto distribution
$\p(Y_0 > u) = u ^{-\kappa}$\,, $u \geq 1$.
Then, for any integers $k, \ell \geq 0$
\begin{eqnarray*}
\lefteqn{\mathcal{L} \left( 
\frac{X_{-k}}{x}, \ldots, \frac{X_0}{x}, \ldots, \frac{X_{\ell}}{x} \, 
\Big| \,
X_0 > x \right) } \\
& \qquad \stackrel{d}{\longrightarrow} 
Y_0 \left(  ({m^*_{k}\cdots m^*_1})^{-1}, \ldots, m_1 ^{*-1}, 1, m_1,
\ldots,  m_1 \cdots m_{\ell} \right).
\end{eqnarray*}
\end{theorem}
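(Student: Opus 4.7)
The plan is to handle forward and backward coordinates separately and then glue them together via the Markov structure. The marginal convergence $X_0/x \mid X_0 > x \dto Y_0 \sim \text{Pareto}(\kappa)$ is a direct consequence of the tail asymptotics in Theorems \ref{thm:nonarith} and \ref{thm:nonarith2}. For the forward coordinates, note that on $\{X_0 > x\}$ we have $X_0 \to \infty$, and the forward dynamics $(\xi_j, A_i^{(j)}, B_j)_{j \geq 1}$ are independent of the conditioning event. Since $X_1 = \sum_{i=1}^{X_0} A_i^{(1)} + B_1$ is a random sum with i.i.d.\ summands of mean $m_1 = m(\xi_1)$ given $\xi_1$, the conditional strong law of large numbers yields $X_1/X_0 \asto m_1$ as $X_0 \to \infty$; iterating gives $X_j/X_0 \asto m_1 \cdots m_j$ for each $j \leq \ell$. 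Combined with $X_0/x \dto Y_0$, this produces the forward factor $Y_0(1, m_1, \ldots, m_1 \cdots m_\ell)$.

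\textbf{Paragraph 2 (backward decomposition).} For the backward direction, fix $k \geq 1$ and iterate \eqref{eq:def-XZ} $k$ steps backwards to write
\[
X_0 = \theta_0 \circ \theta_{-1} \circ \cdots \circ \theta_{-k+1} \circ X_{-k} + R_k =: T_k + R_k ,
\]
where $R_k$ collects the $k$ most recent immigrants together with their descendants up to time $0$. By the Markov property and the i.i.d.\ environment assumption, $X_{-k}$ is distributed as $X_\infty$ and is independent of the recent environment $\xi_0, \ldots, \xi_{-k+1}$, of the random operators $\theta_0, \ldots, \theta_{-k+1}$, and of $R_k$. Let $W_k = m_0 m_{-1} \cdots m_{-k+1}$; then $\E W_k^\kappa = (\E m(\xi)^\kappa)^k = 1$ by \eqref{eq:ass-cramer}, and a conditional strong law of large numbers iterated across the $k$ compositions gives $T_k/X_{-k} \to W_k$ in probability as $X_{-k} \to \infty$. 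Thus the dominant contribution to $X_0$ is the product $W_k X_{-k}$ of two independent quantities.

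\textbf{Paragraph 3 (Breiman tilting).} Breiman's theorem applied to $W_k X_{-k}$, where $X_{-k}$ is regularly varying of index $\kappa$ and $\E W_k^\kappa = 1$, gives
\[
\p(W_k X_{-k} > x) \sim \p(X_{-k} > x) \sim \p(X_0 > x) ,
\]
and the conditional law of $(W_k, X_{-k}/x)$ given $\{W_k X_{-k} > x\}$ converges in distribution to $\bigl((m^*_1, \ldots, m^*_k), Y_0/(m^*_1 \cdots m^*_k)\bigr)$: size-biasing $W_k$ by $W_k^\kappa$ factors through the independent components $m_0, m_{-1}, \ldots, m_{-k+1}$, each tilted to the law $\p(m^* \in \cdot) = \E[\ind{m(\xi) \in \cdot}\, m(\xi)^\kappa]$, with $Y_0 \sim \text{Pareto}(\kappa)$ independent of $(m^*_i)$. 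Combined with the approximation $T_k \approx W_k X_{-k}$ and tail-negligibility of $R_k$ (see next paragraph), this gives $X_{-k}/x \dto Y_0/(m^*_1 \cdots m^*_k)$. For intermediate coordinates $1 \leq j \leq k-1$, an analogous LLN for the partial composition $\theta_{-j} \circ \cdots \circ \theta_{-k+1}$ applied to $X_{-k}$ gives $X_{-j}/X_{-k} \to m_{-k+1} \cdots m_{-j}$; after relabeling via the tilt as $m^*_{j+1} \cdots m^*_k$, this yields $X_{-j}/x \dto Y_0/(m^*_1 \cdots m^*_j)$. Jointness of the forward and backward parts follows from the conditional independence of the forward and backward dynamics given $X_0$.

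\textbf{Paragraph 4 (main obstacle).} The hardest part is to justify the two approximations fed into Paragraph~3: (a) the tail-negligibility of the remainder, $\p(R_k > \epsilon x) = o(\p(X_0 > x))$ for every $\epsilon > 0$, and (b) the uniformity of the LLN approximation $T_k \approx W_k X_{-k}$ sufficient to transfer the Breiman asymptotics from $W_k X_{-k}$ to $T_k$. For (a), $R_k$ is a finite sum of $k$ random sums of the form $\theta_0 \circ \cdots \circ \theta_{-i+1} \circ B_{-i}$; the assumed moment hypotheses (in particular $\E B^\nu < \infty$ for some $\nu > \kappa$ in the setting of Theorem~\ref{thm:nonarith2}, together with the sharper moment conditions of Lemma~\ref{lemma:Goldie-cond}) combined with Lemmas~\ref{lemma:X-moments} and~\ref{lemma:random-sum-moments2} yield $\E R_k^{\kappa+\delta} < \infty$ for some $\delta > 0$, so that Markov's inequality gives the desired bound. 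Point (b) follows from the observation that the conditional variance of $\theta_0 \circ \cdots \circ \theta_{-k+1}$ applied to a value $y$ grows linearly in $y$, whereas the conditional mean $W_k y$ is of order $y$, so relative fluctuations vanish as $y \to \infty$, making the Breiman tilting for $W_k X_{-k}$ directly applicable to $T_k$.
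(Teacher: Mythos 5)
Your forward argument is essentially the paper's: conditional LLN for $X_\ell/X_0$ combined with $X_0/x \mid X_0 > x \dto Y_0$. The paper then stops there: it observes that the forward tail process $Y_0(1,m_1,m_1m_2,\ldots)$ together with regular variation \emph{uniquely determines} the whole tail process (Theorem 3.1 of Basrak--Segers 2009), and reads off the backward law from the general theory of Markov tail chains (Segers 2007, Thm.\ 5.2). You instead attempt a direct computation of the backward tail process via the decomposition $X_0 = T_k + R_k$ and a Breiman tilting of $W_k X_{-k}$. This is a genuinely different route, but it has a real gap.

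The gap sits precisely in the tail-negligibility of $R_k$. Under the second alternative in \eqref{eq:main-ass} --- the setting of Theorem~\ref{thm:nonarith2}(ii), where $\E m(\xi)^\kappa = \varphi < 1$ and $\E m(\xi)^t = \infty$ for every $t > \kappa$ --- the remainder $R_k$ is \emph{not} asymptotically negligible relative to $\p(X_0 > x)$. Indeed in that regime the tilted law is defective, $\p(m^*_1 = \infty) = 1-\varphi > 0$; with the convention $1/\infty = 0$, a positive fraction $1-\varphi$ of extreme values at time $0$ have $Y_{-1} = 0$, i.e.\ they are produced by a recent immigrant rather than propagated from the distant past. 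That event lives exactly in $R_k$. Your Paragraph 3, which conditions on $\{W_k X_{-k} > x\}$, therefore sees only the part of the tail that propagates and cannot produce the mass at $0$. Moreover, your claimed moment bound $\E R_k^{\kappa+\delta} < \infty$ has no support here: $R_k$ involves products $m(\xi_0)\cdots m(\xi_{-i+1})$, and $\E m(\xi)^{\kappa+\delta} = \infty$ for every $\delta>0$ in this case (and also in case (i) of Theorem~\ref{thm:nonarith2}, where $F_\kappa$ has infinite mean). Relatedly, your $\E W_k^\kappa = 1$ in Paragraph 2 only holds when $\varphi = 1$. Even in the Cramér setting of Theorem~\ref{thm:nonarith}, where $R_k$ \emph{is} negligible, one cannot get this from a moment of order $\kappa+\delta$ (only $\E m(\xi)^\kappa \log m(\xi) < \infty$ is available); the correct argument is renewal-theoretic (each fixed term $\theta_0\circ\cdots\circ B_{-i}$ has tail $o(x^{-\kappa})$, which then forces the mass $Cx^{-\kappa}$ of $X_\infty$ to accumulate from the infinite sum). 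Finally, the classical Breiman theorem you invoke requires $\E W^{\kappa+\epsilon}<\infty$; a refinement with only $\E W^\kappa < \infty$ does apply when the tail of $X_\infty$ is genuinely Pareto (Theorem~\ref{thm:nonarith}), but it does not transfer automatically to the slowly-varying tails of Theorem~\ref{thm:nonarith2}. The paper's route via the general tail-process machinery avoids all of these case distinctions at once, which is why it is preferable here.
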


Writing the random vector on the r.h.s.~above as  
$(Y_t,\,  t =-k, \ldots ,l)$, note that,
in the language of Basrak and Segers \cite{basrak:segers:09}, 
the sequence $(Y_t)_{t\in \Z}$ 
represents the tail process of the sequence $(X_t)$. Hence, in this case,
both the forward and backward tail processes are multiplicative random walks.

\begin{proof}
We will first show that for arbitrary $\ell \geq 0$
and  $\vep >0$  
\begin{equation} \label{eq:CCinP}
\p \left( \left| \frac{X_{\ell}}{X_0} - m_1 \cdots m_{\ell} \right|  > \vep \, 
\Big| \,
X_0 > x \right) \longrightarrow 0\,, \mbox{  as } x \toi\,.
\end{equation}
Indeed, consider first $\ell =1$,  
by the independence of $ A^{(1)}_j$'s and $B_1$ from 
$X_0$, 
\[
\begin{split}
\lefteqn{\p \left( \left| \frac{X_{1}}{X_0} - m_1  \right|  > \vep \, 
\Big| \, X_0 > x \right)  } \\
& =  \dsum_{k > x} \p \left( \left| 
\frac{ \sum_{j=1}^{X_0}  A^{(1)}_j + B_1 }{X_0} - m_1  \right|  > 
\vep \,;\, X_0 = k \right)
 \frac{1}{\p \left( X_0 > x \right)}\,. \\
& =  \dsum_{k > x} \p \left( \left| 
\frac{ \sum_{j=1}^{k}  A^{(1)}_j + B_1 }{k} - m_1 \right|  > \vep 
\right)
\frac{\p \left( X_0 = k \right) }{\p \left( X_0 > x \right)}\,. 
\end{split}
\]
By the ergodic theorem 
$ n^{-1} \sum_{j=1}^n A^{(1)}_j \to m_1$ a.s. Hence the r.h.s.~above tends to  0 as 
$x \toi$. Instead of general $\ell \geq 1$, consider for simplicity
$\ell = 2$. One can write
$X_2= \sum_{j=1}^{X_0} \tilde{A}^{(2)}_j  + 
\sum_{j=1}^{B_1} {A}^{(2)}_j  + B_2$. Given the environment 
$\mathcal{E}$, the three terms on the r.h.s.~are independent and 
$\tilde {A}^{(2)}_j$'s are i.i.d.~with the following distribution
\[
\tilde{A}^{(2)}_1
\eind \sum_{j=1}^{{A}^{(1)}_1 } {A}^{(2)}_j \,.
\]
In particular,  $ \E  [ \tilde{A}^{(2)}_1 \, | \, \mathcal{E} ] = m_1 m_2$, 
the ergodic theorem applied on the sequence 
$(\tilde{A}^{(2)}_j)$ again yields
\[ 
\begin{split}
& {\p \left( \left| \frac{X_{2}}{X_0} - m_1 m_{2} \right|  > \vep 
\, \Big| \, X_0 > x \right)  } \\
& =  \dsum_{k > x} \p \left( \left| \frac{ \sum_{j=1}^{k}  
\tilde{A}^{(2)}_j   + \sum_{j=1}^{B_1} {A}^{(2)}_j + B_2 }{k} - m_1 m_2 \right|  > \vep 
\right)
\frac{\p \left( X_0 = k \right) }{\p \left( X_0 > x \right)}\,
\to 0
\end{split}
\]
as $x \toi$. The same argument works for any $\ell \geq 1$, which implies 
\eqref{eq:CCinP}. Observing that $\mathcal{L} ({X_0}/{x} \mid X_0> x ) \dto Y_0$, 
we can conclude that for an arbitrary $\ell$ 
\begin{eqnarray*}
{\mathcal{L} \left( 
\frac{X_0}{x}, 
\frac{X_{1}}{X_0}, \ldots, \frac{X_{\ell}}{X_0} \, 
\Big| \,
X_0 > x \right) } 
 \stackrel{d}{\longrightarrow} 
\left( Y_0,  m_1,\ldots, m_1\cdots m_{\ell}  \right).
\end{eqnarray*}
This proves the statement of the theorem for $k=0$. 
In particular, the sequence $(X_t)$ is 
regularly varying, and the multiplicative random walk 
\begin{equation}\label{eq:TailFor}
Y_i  = Y_0 \Theta_i = Y_0 m_1\cdots m_i\,,\qquad i \geq 0 
\end{equation} 
represents the forward part of its tail process.  
By Theorem 3.1 in \cite{basrak:segers:09},
this uniquely determines the distribution of the whole tail process, including the 
negative indices.
The past distribution of the tail process has been determined already 
(see Theorem 5.2 and Example 3.3 in Segers~\cite{segers:07} for instance). 
It turns out that  the backward part of the tail 
process has the representation
\begin{equation}\label{eq:TailBack}
Y_{-k}  = Y_0 \Theta_{-k} = Y_0 / (m^*_k\cdots m^*_1)\,,\qquad k > 0 
\end{equation}
for  an i.i.d.~sequence $(m^*_n)$ as in the statement of the theorem.
\end{proof}

Alternatively, the theorem above can be obtained  using the general 
results by Janssen and Segers \cite[Theorem 2.1]{janssen:segers:14}. 

\smallskip

Next we prove that the large values in the sequence $(X_n)$ cannot linger
for "too long", i.e.~the so called anticlustering condition from \cite{DH95} holds.

\begin{lemma} \label{lemma:anticluster}
Let $r_n$ be a sequence such that $r_n = o(n)$. Then for any $u>0$
\[
\lim_{k \to \infty} \limsup_{n \to \infty} \p 
\left( \max_{k \leq |t| \leq r_n} X_t > a_n u \, \bigg|\, X_0 > a_n u  \right)  = 0.
\]
\end{lemma}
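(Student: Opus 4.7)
The plan is to decompose $X_t$, for $t \geq 1$, into the contribution of the descendants of the $X_0$ particles present at time $0$ and the contribution of immigrants arriving later, and to control the two pieces separately. Write $X_t = \overline{X}_t + \widetilde{X}_t$, with
\[
\overline{X}_t = \theta_t \circ \cdots \circ \theta_1 \circ X_0,
\qquad
\widetilde{X}_t = \sum_{s=1}^{t} \theta_t \circ \cdots \circ \theta_{s+1} \circ B_s,
\]
so that $\overline{X}_t$ is a subcritical branching process in random environment started with $X_0$ individuals, while $\widetilde{X}_t$ collects the descendants at time $t$ of all immigrants arriving strictly after $0$. Since $\widetilde{X}_t$ is measurable with respect to $(\xi_s, A^{(s)}_i, B_s)$ with $s \geq 1$ only, it is independent of $X_0$; moreover, $\widetilde{X}_t$ has the law of a finite truncation of the series in \eqref{eq:def-Xstat}, hence is stochastically dominated by $X_\infty$.

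The immigration part is immediate. By the independence just observed, the tail asymptotics $\p(X_\infty > x) \sim C x^{-\kappa}$ (up to slowly varying factors) from Theorems~\ref{thm:nonarith} and~\ref{thm:nonarith2}, and the defining property \eqref{eq:def-a} of $a_n$,
\[
\p\bigl(\widetilde{X}_t > a_n u/2 \,\big|\, X_0 > a_n u\bigr)
= \p\bigl(\widetilde{X}_t > a_n u/2\bigr)
\leq \p\bigl(X_\infty > a_n u/2\bigr)
= O(1/n)
\]
uniformly in $t$, so summing over $k \leq t \leq r_n$ produces $O(r_n/n) = o(1)$.

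For the ancestral part, I would choose $\alpha \in (0,\kappa)$ with $\alpha \leq 1$. The function $\alpha \mapsto \E m(\xi)^\alpha$ is convex, equals $1$ at $\alpha \in \{0, \kappa\}$, and has derivative $\E \log m(\xi) < 0$ at the origin by subcriticality \eqref{eq:ass-subcrit}, hence is strictly less than $1$ on $(0,\kappa)$. Conditionally on $X_0$ and $\xi$, $\overline{X}_t$ is a sum of $X_0$ i.i.d.~single-ancestor branching processes with conditional mean $\prod_{i=1}^t m_i$, so Jensen's inequality for the concave map $z \mapsto z^\alpha$ (with equality when $\alpha=1$) yields
\[
\E\bigl[\overline{X}_t^\alpha \,\big|\, X_0, \xi\bigr]
\leq \Bigl(X_0 \prod_{i=1}^t m_i\Bigr)^\alpha,
\qquad
\E\bigl[\overline{X}_t^\alpha \,\big|\, X_0\bigr]
\leq X_0^\alpha \bigl(\E m(\xi)^\alpha\bigr)^t.
\]
Markov's inequality together with Karamata's theorem, which gives $\E[X_0^\alpha \ind{X_0 > a_n u}] \sim \tfrac{\kappa}{\kappa-\alpha}(a_n u)^\alpha \p(X_0 > a_n u)$ for $\alpha < \kappa$, then produces
\[
\p\bigl(\overline{X}_t > a_n u/2 \,\big|\, X_0 > a_n u\bigr)
\leq C\bigl(\E m(\xi)^\alpha\bigr)^t (1+o(1))
\]
with $C = C(\alpha,\kappa,u)$ independent of $t$.

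Combining the two bounds through a union bound over $t$, and handling negative indices via the stationarity identity $\p(X_{-t} > a_n u, X_0 > a_n u) = \p(X_0 > a_n u, X_t > a_n u)$ (which reduces the backward sum to the forward one, at the cost of a factor $2$),
\[
\p\Bigl(\max_{k \leq |t| \leq r_n} X_t > a_n u \,\Big|\, X_0 > a_n u\Bigr)
\leq o(1) + 2C \sum_{t=k}^{\infty} \bigl(\E m(\xi)^\alpha\bigr)^t,
\]
and the right-hand side tends to $0$ on sending first $n \to \infty$ and then $k \to \infty$. The main obstacle is the case $\kappa \leq 1$, where $\E X_\infty = \infty$ rules out a plain first-moment Markov estimate; the resolution is to work with the fractional moment of order $\alpha < \kappa$, where Jensen's inequality is essential for converting the $X_0$ factor produced by the $X_0$ i.i.d.~ancestral lineages into $X_0^\alpha$, which is then integrable against the regularly varying tail and yields a bound uniform in $n$ via Karamata.
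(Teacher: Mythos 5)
Your proof is correct and takes essentially the same route as the paper's own: the identical decomposition of $X_t$ into the progeny of $X_0$ and the contribution of later immigrants, a stochastic-domination bound of the latter by $X_\infty$ (handled through \eqref{eq:def-a}), and a fractional moment estimate of order $\alpha < \kappa$, $\alpha \leq 1$, combined with Markov's inequality and Karamata's theorem for the ancestral part, together with stationarity to fold in the negative indices. The only cosmetic difference is that you obtain $\E[\overline{X}_t^\alpha \mid X_0,\xi] \leq (X_0\prod_{i=1}^t m_i)^\alpha$ directly via Jensen's inequality for the concave map $z\mapsto z^\alpha$, whereas the paper uses the slightly cruder elementary bound $z^\alpha \leq z\ind{z>1}+1$, which costs an extra factor of $2$ but serves the same purpose.
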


\begin{proof}
The proof is similar to the proof of Lemma 3.2 in \cite{BKP}. By the strict stationarity
\[
\p \left( \max_{k \leq |t| \leq r_n} X_t > a_n u \, \mid\,  X_0 > a_n u  \right) \leq 
2 \sum_{t=k}^{r_n} \p ( X_t > a_n u  \, \mid\,  X_0 > a_n u).
\]
Consider the decomposition
\begin{equation} \label{eq:stat-decomp}
X_t = \sum_{i=1}^{X_0} \widetilde A_i^{(t)} + \sum_{k=0}^{t-1} C_{t,k} ,
\end{equation}
where the first sum stands for the descendants of generation 0, while the second sum 
stands for the descendants of the immigrants arrived after generation 0,
i.e.~$C_{t,k} = \theta_t \circ \cdots \circ \theta_{t-k+1} \circ B_{t-k}$ 
denotes the number of descendants in generation $t$ from
immigrants arrived in generation $t-k$.
Then $X_0$ and 
$(C_{t,k})_{k=0,1,\ldots, t-1}$ are independent, and 
$(\widetilde A_i^{(t)})_{i \in \N}$ are independent given the environments 
$\xi_1, \ldots, \xi_t$. 
Then
\[ 
\begin{split}
\p ( X_t > a_n u | X_0 > a_n u ) & \leq 
\p \left( \sum_{k=0}^{t-1} C_{t,k}  > a_n u/2  \, \big| \,  X_0 > a_n u \right) \\
& \quad + \p \left( \sum_{i=1}^{X_0} \widetilde A_i^{(t)}
> a_n u/2 \, \big| \,  X_0 > a_n u \right).
\end{split}
\] 
Using the independence of $X_0$ and 
$(C_{t,k})_{k=0,1,\ldots, t-1}$
\[
\begin{split}
& \p \left( \sum_{k=0}^{t-1} C_{t,k} > a_n u / 2 \, \bigg| \, X_0 > a_n u \right) 
= \p \left( \sum_{k=0}^{t-1} C_{t,k} > a_n u / 2 \right) \\
& \leq  \p \left( \sum_{k=0}^{\infty} C_{t,k} > a_n u / 2 \right) 
 = \p \left( X_0 > a_n u / 2 \right).
\end{split}
\]
By \eqref{eq:def-a}
\[
\limsup_{n \to \infty} r_n  \p \left( X_0 > a_n u / 2 \right) = 0,
\]
so it remains to show that
\[ 
\lim_{k \to \infty} \limsup_{n \to \infty}
\sum_{t=k}^{r_n} \p \left( \sum_{i=1}^{X_0} \widetilde A_i^{(t)} 
> a_n u / 2 \, \Big| \, X_0 > a_n u \right) = 0. 
\] 
Choose $\alpha < \min\{ \kappa ,1 \}$. Then, by the convexity of the function
$u \mapsto \E m(\xi)^u$ we obtain $\E m(\xi)^\alpha < 1$. 
Note that
\[
\E [ \widetilde A_i^{(t)} | \xi_1, \ldots, \xi_t ] 
= m(\xi_1) \ldots m(\xi_t).
\]
Therefore,
\[
\begin{split}
& \E \left[ \left( 
\frac{\sum_{i=1}^k  \widetilde A_i^{(t)} }
{k  m(\xi_1) \ldots m(\xi_t)}
\right)^\alpha 
\Big|  \xi_1, \ldots, \xi_t \right] \\
& \leq 
\E \left[  
\frac{\sum_{i=1}^k  \widetilde A_i^{(t)} }
{k  m(\xi_1) \ldots m(\xi_t)}
\ind{\sum_{i=1}^k  \widetilde A_i^{(t)} >  
k  m(\xi_1) \ldots m(\xi_t)} + 1  
\Big|  \xi_1, \ldots, \xi_t \right] \leq 2,
\end{split}
\]
which implies that
\[
\begin{split}
\E \left( \sum_{i=1}^k \widetilde A_i^{(t)} \right)^\alpha 
& \leq 2 k^\alpha 
\E \left(  m(\xi_1)^\alpha \ldots m(\xi_t)^\alpha \right) 
= 2 k^\alpha \left[ \E m(\xi)^\alpha \right]^t.
\end{split}
\]
Note that the upper bound is summable in $t$, thus, by Markov's inequality
\[
\begin{split}
& \sum_{t=k}^{r_n} \p \left( \sum_{i=1}^{X_0} \widetilde A_i^{(t)} 
> a_n u / 2 \, \Big| \, X_0 > a_n u \right) \\
& = \frac{1}{\p ( X_0 > a_n u)}
\sum_{t=k}^{r_n} \sum_{\ell > a_n u}
\p ( X_0 = \ell) \, 
\p \left( \sum_{i=1}^{\ell} \widetilde A_i^{(t)} > a_n u / 2 \right) \\
& \leq 
\frac{1}{ \p ( X_0 > a_n u)}
\sum_{t=k}^{r_n} \sum_{\ell > a_n u}
\frac{2^{1+\alpha}}{(a_n u)^\alpha} \ell^\alpha 
\left[ \E m(\xi)^\alpha \right]^t \\
& \leq 
2^{1+\alpha} \frac{ \E X_0^\alpha \ind{ X_0 > a_n u}}
{(a_n u)^\alpha  \p ( X_0 > a_n u) }
\sum_{t=k}^{\infty} \left[ \E m(\xi)^\alpha \right]^t .
\end{split}
\]
As $n \to \infty$
the second factor here tends to $\kappa / (\kappa - \alpha)$ by Karamata's theorem,
while the third factor vanishes as $k \to \infty$.
Thus the result follows.
\end{proof}

Recall that by our assumptions on progeny and immigrant distributions,
there exists  $\alpha$, $0 < \alpha < \kappa$, $\alpha \leq 1$,
such that
\[ 
\E m(\xi)^\alpha < 1 \quad  \mbox{  and  }\quad 
\E m^\circ(\xi)^{\alpha} < \infty.
\] 

Denote the Markov transition kernel of the sequence $(X_n)$ by $P(x, \cdot )$ and
consequently, by $ P^n(x, \cdot)$, $n \geq 1$ denote the $n$-step Markov transition
kernel corresponding to $P$.
Next we apply the standard drift method to show that the Markov chain $(X_n)$  is 
uniformly $V$-geometrically ergodic. First we introduce some notation.
For a function $V: \N \to [1,\infty)$, and any two probability measures 
$\nu_1, \nu_2 \in \Delta$ put 
\[
\| \nu_1  - \nu_2 \|_V = 
\sup_{g: \, |g| \leq V} \Big| \sum_{n=0}^\infty g(n) 
\left( \nu_1(\{n\}) - \nu_2(\{n\}) \right) \Big|.
\]
We use the notation $\E_\zeta$ when the initial distribution of $X_0$ is
$\zeta$, in particular, $\E_x$ means that $X_0 = x$. The stationary 
distribution is denoted by $\pi$.

\begin{lemma} \label{lemma:X-ergod}
Let $V(x) = x^\alpha + 1$, with $\alpha < \kappa$, $\alpha \leq 1$.
The Markov chain  $(X_n)_{n}$ is uniformly $V$-geometrically ergodic, 
that is, there exists $\rho \in (0,1)$ and $C > 0$ 
such that for each $x \in \N$
\[
\| P^n ( x,  \cdot )  - \pi  \|_{V} \leq C V(x) \rho^n\,,
\]
where $\pi$ denotes the unique stationary distribution of the Markov chain.
\end{lemma}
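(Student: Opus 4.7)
The plan is to establish a geometric Foster--Lyapunov drift inequality for the test function $V$ and combine it with a smallness condition on a sublevel set to invoke a standard $V$-uniform ergodicity theorem, such as Theorem 15.0.1 of \cite{MeyTwe} or Theorem 19.1.3 of \cite{DMS18}.

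The first step is to derive the drift. Writing $PV(x) = \E[(\theta_1 \circ x + B_1)^\alpha] + 1$ and using subadditivity of $y \mapsto y^\alpha$ (valid since $\alpha \leq 1$) together with the conditional Jensen inequality applied to $\E[\,\cdot\, \mid \xi_1]$, one bounds $\E[(\theta_1 \circ x)^\alpha \mid \xi_1] \leq x^\alpha m(\xi_1)^\alpha$, so that $PV(x) \leq \rho V(x) + K$, where $\rho := \E m(\xi)^\alpha$ and $K := \E B_1^\alpha + 1 - \rho$. The choice $\alpha \in (0, \kappa)$, $\alpha \leq 1$, forces $\rho < 1$, because $\lambda(t) := \E m(\xi)^t$ is convex with $\lambda(0) = \lambda(\kappa) = 1$ and $\lambda'(0) = \E \log m(\xi) < 0$ by \eqref{eq:ass-subcrit}. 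Moreover, $\E B_1^\alpha < \infty$ follows from the moment assumptions implicit in \eqref{eq:main-ass}.

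Next, pick $\lambda \in (\rho, 1)$ and set $C := \{x \in \N : V(x) \leq K/(\lambda - \rho)\}$, a finite subset of $\N$. The drift bound rearranges into the familiar inequality $PV \leq \lambda V + b\,\mathbf 1_C$ with $b = K + \rho K/(\lambda - \rho)$. As established in the proof of Lemma~\ref{lemma:Goldie-cond}, the chain $(X_n)$ admits $d_0$ as an accessible atom; hence it is $\phi$-irreducible with $\delta_{d_0}$ in the support of its maximal irreducibility measure, and every finite subset of $\N$, in particular $C$, is automatically small. Invoking the quoted theorem then yields constants $C_0 > 0$ and $\rho_0 \in (0,1)$ such that $\|P^n(x,\cdot) - \pi\|_V \leq C_0 V(x) \rho_0^n$ for every $x \in \N$, which is the claim.

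The only real obstacle is verifying aperiodicity, since $d_0$ need not equal $0$. A clean way is to exhibit two return times to $d_0$ of coprime length, using $\p(A_1 = 0) > 0$ (from subcriticality) and the non-degeneracy of the immigration distribution $\nu^\circ_\xi$ to produce two distinct short excursions returning to $d_0$; alternatively one can appeal to the $T$-chain structure coming from irreducibility together with the fact that finite sets are small. Once aperiodicity is in place, the remainder reduces to a direct application of the cited ergodicity theorem.
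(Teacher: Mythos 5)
Your proof is essentially the paper's: the same Lyapunov function $V(x)=1+x^\alpha$, the same drift bound $\E[(\theta_1\circ x + B_1)^\alpha]\le x^\alpha\E m(\xi)^\alpha + \E m^\circ(\xi)^\alpha$ via subadditivity and conditional Jensen, the same observation that convexity of $t\mapsto\E m(\xi)^t$ forces $\E m(\xi)^\alpha<1$, followed by an appeal to the Meyn--Tweedie $V$-uniform ergodicity theorem (the paper cites Theorem 16.0.1, equivalence of (ii) and (iv)). The one substantive difference is how smallness of the sublevel set is established. You invoke the general principle that finite sets are small for $\phi$-irreducible chains on a countable state space, and then flag aperiodicity as a separate obstacle that you sketch but do not resolve. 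The paper instead proves smallness of $M=\{x\le x_0\}$ directly by exhibiting the one-step minorization $P(x,\cdot)\ge\mu(\cdot)$ for $x\le x_0$, where $\mu(C)=\p(\sum_{i=1}^{x_0}A_i=0,\,B\in C)$, which is nontrivial because subcriticality gives $\p(A=0)>0$. This direct minorization is the cleaner route, because once $x_0$ is taken at least $d_0$, the measure $\mu$ gives positive mass to $M$ itself, so strong aperiodicity (and hence aperiodicity) comes for free and the separate coprime-return-times argument you propose is not needed. Both routes lead to the same drift-plus-smallness hypotheses, so either closes the proof; yours would be complete once the aperiodicity step is filled in, and citing the $V$-uniform version of the theorem (16.0.1 rather than 15.0.1 in \cite{MeyTwe}) would be more precise.
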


\begin{proof}
Applying Theorem 16.0.1 in \cite{MeyTwe} (equivalence of (ii) and (iv)), it 
is enough to prove that for some $\beta \in (0,1)$, $b > 0$, and a petite set $C$
\[
\E_x[ V(X_1) ]= \E [ V(X_1) | X_0 = x ] \leq \beta V(x) + b \ind{ x \in C }.
\]

Using Jensen's inequality and that 
$(u+v)^\alpha \leq u^\alpha + v^\alpha$, $u,v > 0$,
we have
\[
\begin{split}
\E \left[ \left( \sum_{i=1}^x A_i + B \right)^\alpha \right] & \leq 
\E \left[ \left( 
\E \left[ \sum_{i=1}^x A_i + B \,\bigg|\, \mathcal{E} \right] \right)^\alpha \right] \\
& \leq x^\alpha \E m(\xi)^\alpha + \E m^\circ(\xi)^\alpha.
\end{split}
\]
Therefore, there exist $\beta \in (0,1)$, $b>0$, and $x_0 \in \N$ such that
for all $x \in \N$
\[
\E_x[ V(X_1) ]= \E [ V(X_1) | X_0 = x ] \leq \beta V(x) + b \ind{ x \leq x_0 }.
\]
Moreover, the level set $ M=\{x : x \leq x_0 \}$ is small in terminology of Meyn and 
Tweedie~\cite{MeyTwe}. Indeed, for any $x \leq x_0$
\[
P (x, C)   = \p \left( \sum_{i=1}^x A_i + B \in C \right) 
 \geq \p \left( \sum_{i=1}^{x_0} A_i = 0\,; B \in C \right) =: \mu(C)\,.
\]
Since we assumed that the process is subcritical, measure $\mu$ is not trivial, $M$ is 
small, and therefore petite as well. Thus the result follows.
\end{proof}

Geometric ergodicity implies that $(X_n)_n$ is strongly mixing (Meyn and Tweedie
\cite{MeyTwe} or Jones \cite{Jones}), which further
implies by Proposition 1.34 in \cite{Kri} the mixing condition $\mathcal{A}'(a_n)$
(see Condition 2.2 in \cite{BKS}).
That is, there is a sequence $r_n \to \infty$, $r_n = o(n)$, 
such that for any $f: [0,1] \times [0,\infty) \to [0,\infty)$ for which 
there exists a $\delta > 0$ such that $f(x,y) = 0$ whenever $y \leq \delta$, 
we have
\[ 
\E \exp \left\{ - \sum_{i=1}^n 
f \Big( \frac{i}{n}, \frac{X_i}{a_n} \Big) \right\} -
\prod_{k=1}^{k_n}
\left( \E \exp \left\{ - \sum_{i=1}^{r_n} 
f \Big (\frac{k r_n}{n}, \frac{X_i}{a_n} \Big) \right\}
\right) \to 0,
\] 
where $k_n = [n/r_n]$.

Roughly speaking, the last condition ensures that the sequence $(X_t)$ 
can be split into blocks of consecutive observations
\[ 
C_i = C_i(n) = (X_{(i-1) r_n + 1},  \ldots, X_{i r_n }) \,,
\qquad i =1,2,\ldots, k_n \,,
\] 
which are asymptotically independent.
Individual blocks could be  considered as random  elements of the space
\[
l_0=\{(x_{i})_{i\in\Z}:\lim_{|i|\toi}|x_{i}|=0\}\,,
\]
see also \cite{basrak:planinic:soulier:18}.
This embedding boils down to concatenating infinitely many zeros before and after a 
given block. 
We equip the space $l_0$ with the sup-norm $\|(x_{i})_{i}\|:=\sup_{i \in \Z}|x_{i}|$ 
and with the corresponding Borel $\sigma$-field.

Consider now stationary sequence $(X_t)$, and 
recall  that 
\begin{equation}\label{eq:TailProc}
 \mathcal{L}\left( \left(\frac{X_t}{x} \right)_t \bigg| \, X_0 >x \right)
 \dto \left(Y_t \right)_t\,,
\end{equation}
where the convergence in distribution is to be understood here with
respect to the product topology (which simply corresponds to the convergence of 
finite-dimensional distributions). Recall further that the
tail process has the form 
$Y_{k}  = Y_0 \Theta_{k}$, $k \in \Z$, where $(\Theta_k)$ is two 
sided multiplicative random walk introduced  in \eqref{eq:TailFor} and 
\eqref{eq:TailBack}.
Since the walk has negative drift, $Y_{t} \to 0$ a.s.~for
$|t|\toi$, that is $(Y_t) \in l_0$ with probability 1. 
Moreover, since the anticlustering condition holds by Lemma \ref{lemma:anticluster},
Proposition 4.2 in \cite{basrak:segers:09}
(see also the proof of Theorem 4.3 in \cite{basrak:segers:09}) implies that
\begin{equation}\label{eq:extind}
\theta = \p \big( \sup_{t<0} Y_t < 1 \big) = 
\p \big( \sup_{t>0} Y_t \leq 1 \big)
= \p \big( \max_{t>0} Y_0 m_1 \cdots m_t  \leq 1 \big)
\end{equation}
is strictly positive. 
By Theorem 4.3 and Remark 4.6 in \cite{basrak:segers:09} 
there exist two distributions of  
random elements, $(Z_t)$ and $(Q_t)_t$ say, in $l_0$ such that
\begin{align}\label{eq:Q's_definition}
(Z_t)_t \eind (Y_t)_t  \; \big| \; \sup_{t<0} Y_t \leq 1 \,,
 \quad \mbox{and} \quad (Q_t)_t \eind (Z_t)_t / \max \{ Z_{t} : t \in \Z \} \,.
\end{align}

\subsection{Point process convergence and partial maximum}\label{ssec:PPConv}

Consider now a branching process with immigration in random
environment started from an arbitrary initial distribution $\zeta$.
Recall that $P(x,\cdot)$ denotes the Markov
transition kernel of the sequence $(X_n)$. By  $\zeta P^n$ we represent
the distribution of the random variable $X_n$. 
In the following theorem we use the notion of convergence in distribution
for point process. Following Kallenberg~\cite{Ka17}, we endow the space
of point measures on the the state space
$[0,1]\times(0,\infty)$ by the vague topology. Recall, that (deterministic)
measures $\nu_n$ converge vaguely to $\nu$ in such a topology if 
$\int f \dd \nu_n \to \int f \dd \nu$ for any continuous bounded function
$f$ with a support in some set of the form  $[0,1]\times(x,\infty)$, $x>0$.

 \begin{theorem}\label{thm:mainPP}
Let $(X_t)_{t\geq 0}$ be a  branching process with immigration in random 
environment -- b.p.i.r.e. satisfying \eqref{eq:def-X} with an arbitrary initial 
distribution.
Assume \eqref{eq:main-ass},  then
\begin{equation} \label{main_2}
N_n=\sum_{i=0}^n \delta_{(i/n, X_i/a_n)} \dto
N=\sum_{i}\sum_{j}\delta_{(T_i, P_iQ_{ij})},
\end{equation}
where 
\begin{itemize}
\item[i)] $\sum_{i}\delta_{(T_i,P_i)}$ is a Poisson process on 
$[0,1]\times(0,\infty)$ with intensity measure $Leb \times \nu$ where
$\nu(\dd y)= \theta \kappa y^{-\kappa-1} \dd y$ for $y>0$.
\item[ii)] $(({Q_{ij}})_j)$\,, $ i=1,2,\ldots$, is an i.i.d.~sequence of 
elements in $l_0$ independent of $\sum_{i}\delta_{(T_i, P_i)}$ with common distribution 
equal to the distribution of $({Q_j})$ in \eqref{eq:Q's_definition}.
\end{itemize}
\end{theorem}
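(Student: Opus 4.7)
The plan is first to use the $V$-geometric ergodicity of $(X_n)$ to reduce to the stationary case, and then to invoke the standard point-process convergence machinery for regularly varying stationary sequences, plugging in the ingredients already established in this section.

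For the reduction, let $(\widetilde X_n)_{n\geq 0}$ denote a strictly stationary version of the chain. Lemma \ref{lemma:X-ergod} yields a coupling of $(X_n)$ started from $\zeta$ with $(\widetilde X_n)$ whose coupling time $\tau$ is a.s.~finite; beyond $\tau$ the two chains coincide. Since $a_n\to\infty$, the contribution to $N_n$ coming from indices $i\leq \tau$ produces only finitely many atoms in any set $[0,1]\times(\varepsilon,\infty)$, and this contribution vanishes in probability in the vague topology on $[0,1]\times(0,\infty)$. Hence $N_n$ and its stationary analogue $\widetilde N_n$ have the same vague limit, and it suffices to prove \eqref{main_2} for the stationary sequence.

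For the stationary process, three ingredients have already been collected: (i) joint regular variation of the finite-dimensional distributions, with tail process $(Y_t)$ of two-sided multiplicative random walk form, established in Theorem \ref{thm:tail-proc}; (ii) the anticlustering condition proved in Lemma \ref{lemma:anticluster}; and (iii) the mixing condition $\mathcal{A}'(a_n)$ with block length $r_n\to\infty$, $r_n=o(n)$, which is a consequence of the strong mixing implied by the $V$-geometric ergodicity of Lemma \ref{lemma:X-ergod}, as already indicated immediately before the statement. With these three ingredients in hand, the general point-process convergence result for regularly varying stationary sequences (Theorem 4.3 in \cite{basrak:segers:09}, in the form refined in \cite{basrak:planinic:soulier:18}) applies and yields $N_n \dto N$, where $N$ is a Poisson cluster process on $[0,1]\times(0,\infty)$. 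Its Poisson skeleton of anchor points has intensity $\mathrm{Leb}\times\nu$ with $\nu(\dd y)=\theta\kappa y^{-\kappa-1}\dd y$, the strictly positive extremal index $\theta$ being exactly the quantity in \eqref{eq:extind}, and the i.i.d.~cluster sequences $(Q_{ij})_j$ are distributed as $(Q_j)$ in \eqref{eq:Q's_definition}, obtained by normalising the conditioned tail process by its supremum.

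The main obstacle is conceptual rather than technical at this stage: once one recognises that the Basrak--Segers / Davis--Hsing machinery applies, identifying the limiting Poisson intensity and the cluster law is mechanical from Theorem \ref{thm:tail-proc} and \eqref{eq:extind}. The substantive analytical work --- deriving the tail process from the multiplicative recursion and the change-of-measure $\xi^*$, controlling long-range clustering by splitting $X_t$ into descendants of generation $0$ and descendants of later immigrants and estimating $\alpha$-moments with $\alpha<\kappa\wedge 1$, and upgrading the $V$-uniform geometric ergodicity into the $\mathcal{A}'(a_n)$ mixing condition --- has already been carried out in the three lemmas preceding the theorem, so the remainder is essentially an appeal to a ready-made black-box result.
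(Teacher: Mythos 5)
Your proposal is correct, and for the stationary case it is essentially the paper's argument: the same three ingredients (Theorem \ref{thm:tail-proc}, Lemma \ref{lemma:anticluster}, and the mixing condition $\mathcal{A}'(a_n)$ obtained from Lemma \ref{lemma:X-ergod}) are fed into a ready-made complete convergence theorem; the paper cites Theorem 3.1 of Basrak and Tafro \cite{BT16}, which is the more on-point reference for the space--time process $\sum_i \delta_{(i/n,X_i/a_n)}$, whereas Theorem 4.3 of \cite{basrak:segers:09} by itself only gives the spatial point process $\sum_i\delta_{X_i/a_n}$, so your appeal to the refinement in \cite{basrak:planinic:soulier:18} is needed to make that citation do the same job. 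Where you genuinely diverge is the passage from the stationary chain to an arbitrary initial distribution: you use an exact coupling with a.s.\ finite coupling time $\tau$ and discard the finitely many atoms with indices $i\leq\tau$, while the paper first upgrades Lemma \ref{lemma:X-ergod} to Harris recurrence (Theorems 10.2.13 and 11.3.1 in \cite{DMS18}) to get $\|\zeta P^m-\pi\|_{TV}\to 0$, and then compares Laplace functionals, bounding the discrepancy by $\|\zeta P^m-\pi\|_{TV}$ uniformly in $n$ and letting $n\toi$ before $m\toi$. Your route is shorter and more probabilistic, but note that Lemma \ref{lemma:X-ergod} does not literally ``yield a coupling'': you should add a sentence constructing it, e.g.\ by coupling the two chains at the accessible atom $d_0$ identified in the proof of Lemma \ref{lemma:Goldie-cond} (both copies hit it a.s.\ in finite time by positive recurrence and aperiodicity), or by invoking the equivalence of total variation convergence with successful exact coupling; with that justification supplied, your argument is a valid and slightly more elementary alternative to the paper's Laplace functional step, which in exchange avoids any explicit coupling construction by working only with distributions.
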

 
 \begin{remark} \label{rem:ThetaQ}
As we have seen in \eqref{eq:extind}, one can characterize the key constant 
$\theta$ in  the theorem using the tail process of \eqref{eq:TailProc} as
\begin{equation*}
\theta =  \p \big( \sup_{t>0} Y_0 \Theta_{t} \leq 1 \big)
= \p \big( E_0 + \sup_{t>0}  S_t  \leq 0 \big)
\,,
\end{equation*}
where random variable $E_0$ is independent of two sided random walk $S_t = \log 
\Theta_t,\, t \in \Z,$ and has exponential distribution with parameter $\kappa$.

To describe the distribution of the $({Q_j})$ in the theorem,
consider the quotient space $\lo$ of elements in $l_0$ which are shift-equivalent 
(elements $(x_{i})_{i}, (y_{i})_{i} \in l_0$ are shift-equivalent if for
some $j\in\Z$, $(x_{i +j})_{i}=(y_{i})_{i}$),
cf.~\cite[Section 2]{basrak:planinic:soulier:18}.
It is shown in Basrak and Planini\'c \cite{basrak:planinic:19}
that  in $\lo$, $(Q_t) $ has the same distribution as  $(\Theta_t)$  under the 
condition that 
$\Theta_t<1$ for $t<0$ and  $\Theta_t \leq 1$ for $t>0$.
In other words, 
$(Q_t)$ has the same distribution as  $(\exp {S_t})\,,$ with  
random walk $(S_t)$ conditioned on staying strictly negative for $t<0$ and non 
positive for $t>0$. We refer to 
Biggins~\cite{biggins:2003} 
for more about random walks conditioned in this way.
\end{remark}

 \begin{proof}
Assume first that $(X_t)_{t\geq 0}$ is a stationary branching process
with immigration in random environment -- b.p.i.r.e.~satisfying
\eqref{eq:def-XZ}. The statement of the theorem follows immediately
from Theorem 3.1 in Basrak and Tafro 
\cite{BT16} together with Lemmas \ref{lemma:anticluster} and \ref{lemma:X-ergod}
and discussion following the second lemma.

It remains to prove the convergence of point processes in 
\eqref{main_2} in the case when $X_0$ has an arbitrary initial distribution
$\zeta$. Observe that
by the proof of Lemma~\ref{lemma:X-ergod}, the function $V$ is superharmonic
for the Markov transition kernel $P(x,\cdot)$ outside of the level set
$M$. Moreover, by the last argument in the proof of Lemma~\ref{lemma:X-ergod} 
each level set $\{x : x \leq  r \}$ is petite.
By Theorem 10.2.13 in Douc et al.~\cite{DMS18} the Markov transition kernel 
$P$ is Harris 
recurrent. This, together with Theorem 11.3.1 in \cite{DMS18} shows that for
any initial distribution $\zeta $ of $X_0$, 
\[
\|  \zeta P^m  - \pi  \|_{TV} \longrightarrow 0 \,,
\]
as $m \toi $, where $\zeta P^m$ denotes the distribution of $X_m$.

Take an arbitrary continuous nonnegative function $f$ with support in $[0,1]\times 
(\vep,\infty) >0$ for some $\vep >0$. Consider Laplace functional 
(see Kallenberg \cite[Chapter 4]{Ka17}) of
the point process $N_n$ in \eqref{main_2} under initial distribution 
$X_0 \sim \zeta$
\[
\E_\zeta \exp \bigg(- \sum_{i=1}^n f(i/n, X_i/a_n) \bigg) 
= \E_\zeta  \exp \bigg(- \sum_{i=m+1}^n f(i/n, X_i/a_n) \bigg) 
  + r_{n,m}\,,
\]
where for  $m$ fixed, $r_{n,m} \to 0 $ as $n \toi$, because $X_i/a_n \to 0$
a.s.~for $i =1,2$, $\ldots, m$. Thus
\begin{align*} 
& \E_{\zeta} \exp \bigg(- \sum_{i=m+1}^n f(i/n, X_i/a_n) \bigg) \\
& = \E_{\zeta P^m}  
\exp \bigg(- \sum_{i=1}^{n-m} f((i+m)/n, X_{i}/a_n) \bigg) 
 =:  \dint_\N H_{n} \, \dd \zeta P^m
\,,
\end{align*}
for a suitably chosen function $H_n$, which is nonnegative and bounded by 1. 
Similarly, for the stationary Markov chain $(X_n)$
\begin{align*}
&\E_\pi  \exp \left(- \sum_{i=1}^n f(i/n, X_i/a_n) \right) 
= \dint_\N H_{n} \, \dd \pi 
+ r'_{n,m}\\
&\longrightarrow 
\E  \exp \bigg(- \sum_{i} \sum_j f(T_i, P_i Q_{ij}) \bigg).  
\end{align*}
Observing that 
$\left| \int_\N H_{n} \, \dd \zeta P^m -\int_\N H_{n} \, \dd \pi 
 \right| \leq
 \|  \zeta P^m  - \pi  \|_{TV} \,,
$ uniformly over $n \in \N$,  and
$r'_{n,m} \to 0 $ for fixed $m$ as $n \toi$, yields the statement.
\end{proof}

We consider next partial maxima of the sequence $(X_t)_{t\in \N}$, namely we define
$M_n= \max \{X_1,\ldots, X_n \}$, for any $n\geq 1$. Observe that event 
$\{M_n/a_n \leq x\}$ corresponds to the event 
$\{N_n([0,1]\times (x,\infty)) = 0\}$.
Moreover, for any $x>0$, the limit point process $N= \sum_i\sum_j  \delta_{T_i,P_iQ_{ij}} 
$ has probability 0 of hitting 
the boundary of the set $[0,1]\times (x,\infty)$, thus
$\p (N_n([0,1]\times (x,\infty)) = 0)  \to 
\p (N([0,1]\times (x,\infty)) = 0 )$.
 However, by \eqref{eq:Q's_definition}, $(Q_{ij})$ in \eqref{main_2} satisfy $Q_{ij} \leq 1$
with at least one point exactly equal to $1$. 
Thus $\p (N([0,1]\times (x,\infty)) = 0 ) = 
\p (\sum_i   \delta_{T_i,P_i }([0,1]\times (x,\infty)) = 0)$. 
Therefore,   for any initial distribution of $X_0$, partial maxima 
converge to a rescaled Fr\'echet distribution.

\begin{corollary} \label{cor:Max}
 Let $(X_t)_{t\geq 0}$ be a  branching process with immigration in random environment -- 
b.p.i.r.e.~satisfying \eqref{eq:def-X} with an arbitrary initial distribution.
Suppose that  \eqref{eq:main-ass} holds. Then for any $x \geq 0$
\[
\p\left(\frac{M_n}{a_n} \leq x \right) \to e^{-\theta x^{-\kappa}} 
\quad \text{as } \ n \toi.
\]
\end{corollary}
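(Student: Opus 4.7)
The plan is to read the partial maximum off the point process limit established in Theorem \ref{thm:mainPP}. The key identity is
\[
\Big\{ \frac{M_n}{a_n} \leq x \Big\} = \{ N_n([0,1] \times (x,\infty)) = 0 \},
\]
so it is enough to compute the limiting probability of the right-hand event. For $x>0$, I would note that the set $[0,1] \times (x,\infty)$ is relatively compact in $[0,1]\times (0,\infty)$ and its topological boundary in this space is $[0,1] \times \{x\}$. The intensity measure $\mathrm{Leb} \times \nu$ of the underlying Poisson process assigns zero mass to this boundary, and since $N$ is obtained by superposing finite clusters at the Poisson atoms, $N([0,1] \times \{x\}) = 0$ almost surely. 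Hence the map $\mu \mapsto \mu([0,1]\times (x,\infty))$ is a.s.~continuous at $N$, and the continuous mapping theorem (or the portmanteau part of vague convergence for point processes, cf.~\cite{Ka17}) gives
\[
\p(N_n([0,1]\times (x,\infty))=0) \longrightarrow \p(N([0,1]\times (x,\infty))=0).
\]

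Next, I would use the structural description of $N$ in \eqref{main_2}. Because $Q_{ij}\leq 1$ with $\max_j Q_{ij}=1$ almost surely by \eqref{eq:Q's_definition}, for each cluster anchored at $(T_i,P_i)$ the shifted points $P_i Q_{ij}$ exceed $x$ if and only if $P_i$ itself exceeds $x$. Therefore
\[
\{ N([0,1]\times (x,\infty)) = 0 \} = \Big\{ \sum_i \delta_{(T_i,P_i)}([0,1]\times (x,\infty)) = 0 \Big\}
\]
almost surely, which reduces the problem to the Poisson void probability of the background process.

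Finally, I compute the intensity of $[0,1]\times (x,\infty)$ under $\mathrm{Leb}\times\nu$ for $x>0$:
\[
\int_0^1\!\int_x^\infty \theta \kappa y^{-\kappa-1}\,\dd y\,\dd t = \theta x^{-\kappa},
\]
so the Poisson void formula gives $\p(\sum_i \delta_{(T_i,P_i)}([0,1]\times (x,\infty))=0) = e^{-\theta x^{-\kappa}}$, yielding the claim for $x>0$. The case $x=0$ is essentially trivial: since $\p(X_\infty>0)>0$ and the chain is Harris recurrent (as established in the proof of Theorem \ref{thm:mainPP}), $\p(M_n=0)\to 0 = e^{-\theta \cdot \infty}$, matching the stated limit. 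The only real point to verify is the continuity of the functional at the limit and the boundary-mass statement, and both are immediate from the explicit structure of $N$; there is no genuine obstacle beyond this bookkeeping.
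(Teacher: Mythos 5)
Your proof is correct and follows essentially the same argument as the paper, which sketches it in the paragraph immediately preceding the corollary: identify $\{M_n/a_n \le x\}$ with the void event for $N_n$, pass to $N$ via the boundary-mass/continuity argument, use $Q_{ij}\le 1$ with $\max_j Q_{ij}=1$ to reduce to the underlying Poisson process, and evaluate the void probability. Your explicit treatment of $x=0$ is a small addition the paper leaves implicit, but it changes nothing of substance.
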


\section{Partial sums}\label{sec:PartialSums}

Denote by $(X_t)$ a stationary branching process with immigration in a random environment 
and assume that at least one of the conditions in \eqref{eq:main-ass} holds.
To derive limit theorem for the partial sums from the 
point process convergence is immediate for $\alpha \in (0,1)$, but needs
an extra condition for $\alpha \in [1,2)$.


\begin{lemma}[Vanishing small values] \label{lem:vsv}
Assume that $\kappa \in [1,2)$. Then for any $\varepsilon > 0$
\[
\lim_{\gamma \downarrow 0} \limsup_{n \to \infty}
\p \left( 	
 \left| \sum_{i=1}^n \left[ 
X_i \ind{ X_i \leq a_n \gamma } - \E [X_0 \ind{X_0 \leq a_n \gamma}]
\right] \right| > a_n \varepsilon \right) = 0.
\]
\end{lemma}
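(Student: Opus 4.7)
The plan is a direct Chebyshev argument. Setting $Y_i^{(\gamma)} = X_i\ind{X_i \leq a_n\gamma}$ and $S_n^{(\gamma)} = \sum_{i=1}^n (Y_i^{(\gamma)} - \E Y_0^{(\gamma)})$, stationarity yields
\[
\var(S_n^{(\gamma)}) \leq n \var(Y_0^{(\gamma)}) + 2n\sum_{k=1}^\infty |\cov(Y_0^{(\gamma)}, Y_k^{(\gamma)})|.
\]
I will show both terms are of order $a_n^2\gamma^{2-\kappa}$ as $n \to \infty$, so that Chebyshev yields $\p(|S_n^{(\gamma)}| > a_n\varepsilon) \leq C\gamma^{2-\kappa}/\varepsilon^2$, which vanishes as $\gamma \downarrow 0$ since $\kappa < 2$.

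For the variance term, regular variation of the tail of $X_0$ with index $\kappa < 2$ (Theorems \ref{thm:nonarith} and \ref{thm:nonarith2}), together with $n\p(X_0 > a_n) \to 1$ and Karamata's theorem, gives
\[
n\var(Y_0^{(\gamma)}) \leq n\E[X_0^2\ind{X_0 \leq a_n\gamma}] \sim \tfrac{\kappa}{2-\kappa}(a_n\gamma)^2\, n\p(X_0 > a_n\gamma) \sim \tfrac{\kappa}{2-\kappa}a_n^2\gamma^{2-\kappa}.
\]

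For the covariance sum I will use the $V$-geometric ergodicity from Lemma \ref{lemma:X-ergod} with $V(x) = x^\alpha + 1$, picking $\alpha \in (\kappa-1, 1)$; this interval is nonempty because $\kappa \in [1,2)$ and is contained in $(0,\min(1,\kappa))$ as required. With $g(x) = x\ind{x \leq a_n\gamma}$, the standard stationary identity $\cov(g(X_0),g(X_k)) = \E[g(X_0)((P^kg)(X_0)-\pi g)]$ combined with $\|P^k(x,\cdot) - \pi\|_V \leq CV(x)\rho^k$ and the elementary estimate $\|g\|_V \leq (a_n\gamma)^{1-\alpha}$ (since $x/(x^\alpha+1) \leq x^{1-\alpha}$ for $x\leq a_n\gamma$) yields
\[
|\cov(Y_0^{(\gamma)},Y_k^{(\gamma)})| \leq C\rho^k(a_n\gamma)^{1-\alpha}\,\E[Y_0^{(\gamma)}V(X_0)].
\]
Since $1+\alpha > \kappa$, Karamata again gives $\E[X_0^{1+\alpha}\ind{X_0\leq a_n\gamma}] \sim C'a_n^{1+\alpha}\gamma^{1+\alpha-\kappa}/n$, while the linear term $\E[X_0\ind{X_0\leq a_n\gamma}]$ is of strictly lower order (bounded for $\kappa>1$, logarithmic for $\kappa=1$). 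The two factors telescope to $|\cov(Y_0^{(\gamma)}, Y_k^{(\gamma)})| \leq C''\rho^k\, a_n^2\gamma^{2-\kappa}/n$, and summing the geometric series in $k$ and multiplying by $n/a_n^2$ yields the required $O(\gamma^{2-\kappa})$ bound.

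The main obstacle is producing a covariance bound which is simultaneously summable in $k$ and of order $1/n$ per term. A crude $L^\infty$ mixing bound $|\cov| \leq (a_n\gamma)^2\alpha(k)$ yields $n\sum_k|\cov|/a_n^2 = O(n\gamma^2)$, which blows up with $n$; Davydov's $L^p$ covariance inequality fails similarly, because for $p>2$ (needed for nontrivial decay in the mixing coefficient) the $L^p$-norm of $Y_0^{(\gamma)}$ introduces a factor $n^{1-2/p} \to \infty$. The $V$-norm ergodicity estimate succeeds precisely because it separates the worst-case scale factor $(a_n\gamma)^{1-\alpha}$ from the typical size $\E[Y_0^{(\gamma)}V(X_0)] = O(a_n^{1+\alpha}\gamma^{1+\alpha-\kappa}/n)$; the choice $\alpha \in (\kappa-1, 1)$, which is available exactly when $\kappa \in [1,2)$, is what makes the two Karamata estimates line up to produce the sharp $a_n^2\gamma^{2-\kappa}/n$ decay per covariance term.
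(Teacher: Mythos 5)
Your proof is correct and follows essentially the same route as the paper's: a Chebyshev bound on the truncated partial sum, with the variance controlled by Karamata and the covariances controlled via the $V$-geometric ergodicity of Lemma~\ref{lemma:X-ergod} with $V(x)=1+x^\alpha$ for $\alpha$ chosen so that $1+\alpha>\kappa$. The only cosmetic difference is that you invoke the $V$-norm duality bound for $|\cov(g(X_0),g(X_k))|$ abstractly, whereas the paper carries out the same estimate by conditioning on $X_0=m$ and expanding the sum explicitly; the two are the same computation.
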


\begin{proof}
Choose $\alpha < 1$ such that $1 + \alpha > \kappa$. 
(For $\kappa > 1$ we may choose  $\alpha = 1$.)
Lemma \ref{lemma:X-ergod}  holds with $V(x) = 1 + x^\alpha$. 
Let $\varepsilon = 1 - \alpha$. Then, 
\[
h(x) : = \frac{x}{(a_n \gamma)^\varepsilon} \ind{x \leq a_n \gamma}
\leq x^\alpha, \quad \text{for } x \in \N.
\]
Thus $|h| \leq V$, therefore by Lemma \ref{lemma:X-ergod}, 
for some $\rho \in (0,1)$ and $C > 0$ we have
\[ 
\left| 
\E \left[ 
\frac{X_i}{(a_n \gamma)^\varepsilon} \ind{ X_i \leq a_n \gamma} \, \big|
 X_0 = m \right] -
\E \frac{X_0}{(a_n \gamma)^\varepsilon} \ind{X_0 \leq a_n \gamma} \right| 
\leq C (1 + m^\alpha) \rho^i.
\] 
Thus
\begin{equation} \label{eq:cov-bound}
\begin{split}
& \E  \left[ X_i \ind{X_i \leq a_n \gamma} X_0 
\ind{X_0 \leq a_n \gamma} \right] \\
& = \sum_{m \leq a_n \gamma} m \p ( X_0 = m)  
(a_n \gamma)^\varepsilon \E \left[ \frac{X_i}{(a_n \gamma)^\varepsilon} 
\ind{ X_i \leq a_n \gamma}\, \big| \,  X_0 = m \right] \\
& \leq \sum_{m \leq a_n \gamma} m \p ( X_0 = m)  
(a_n \gamma)^{\varepsilon} \left( \E \frac{X_0}{(a_n \gamma)^\varepsilon} 
\ind {X_0 \leq a_n \gamma} + C ( 1 + m^\alpha) \rho^i \right) \\
& \leq \left[ \E X_0 \ind {X_0 \leq a_n \gamma} \right]^2 +
2 C \rho^i (a_n \gamma)^\varepsilon 
\E X_0^{2-\varepsilon} \ind {X_0 \leq a_n \gamma}.
\end{split}
\end{equation}
For $\beta > \kappa$
by the regular variation and Karamata's theorem as $u \to \infty$
\begin{equation} \label{eq:Karamata-2}
\E X_0^{\beta} \ind{ X_0 \leq u }  \sim 
\frac{\beta}{\beta-\kappa} u^{\beta} \overline F(u).
\end{equation}
Substituting back into \eqref{eq:cov-bound}, and using
that $\overline F(a_n \gamma) \sim \gamma^{-\kappa} n^{-1}$ we have
\[
\cov \left( X_0 \ind{X_0 \leq a_n \gamma}, 
X_i \ind{X_i \leq a_n \gamma} \right)
\leq C \rho^i \frac{a_n^2}{n} \gamma^{2 - \kappa}.
\]
Therefore, using the stationarity, \eqref{eq:cov-bound}, and \eqref{eq:Karamata-2},
we obtain
\[
\begin{split}
& \E \left( \sum_{i=1}^n \left[ 
X_i \ind{X_i \leq a_n \gamma} - \E [X_0 \ind{X_0 \leq a_n \gamma}]
\right] \right)^2 \\ 
& = n \var \left( X_0 \ind{X_0 \leq a_n \gamma} \right) \\
& \qquad + \sum_{i=1}^{n-1} 2 (n-i) \cov \left( X_0 \ind{X_0 \leq a_n \gamma}, 
X_i \ind{X_i \leq a_n \gamma} \right) \\
& \leq n \var \left( X_0 \ind{X_0 \leq a_n \gamma} \right)
+ C \sum_{i=1}^{n-1} (n-i) \rho^i \frac{a_n^2}{n} \gamma^{2 - \kappa} \\
& \leq C a_n^2 \gamma^{2- \kappa}.
\end{split}
\]
Therefore, the claim follows from Chebyshev's inequality.
\end{proof}

\begin{remark}
In fact, we proved condition (9) in Davis \cite{Davis83}. 
\end{remark}

\begin{theorem} \label{thm:CLT}  
Let $(X_t)_{t\geq 0}$ be a  branching process with 
immigration in random environment -- b.p.i.r.e.~satisfying \eqref{eq:def-X} with an 
arbitrary initial distribution.  Assume \eqref{eq:main-ass} and
denote by $(b_n)$ a sequence of real numbers given by
$$
b_n = 0,\quad  \kappa < 1, \quad \mbox{ and } \quad 
b_n = n \E \bigg( \frac{X_{\infty}}{a_{n}} 
\ind{ \frac{X_{\infty}}{a_{n}} \le 1  } \bigg), 
\quad \mbox{ for }  \kappa \in [1,2).
$$
Then
\begin{equation}\label{eq:StLim}
V_{n} =
\sum_{k=1}^{n} \frac{X_{k}}{a_{n}} - b_n \dto V, \qquad n \to \infty,
\end{equation}
where $V$ is a $\kappa$-stable random variable.
For $\kappa > 2$, as $n \to \infty$
\[
\frac{1}{\sqrt{n} \sigma}  \sum_{j=1}^n (X_i - \E X_\infty ) \dto Z,
\]
where $Z$ is a standard normal random variable, and 
$\sigma^2 = \frac{1 + \E A}{1  - \E A} \var (X_\infty) > 0$.
\end{theorem}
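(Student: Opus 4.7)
The theorem splits naturally according to $\kappa$: for $\kappa \in (0,2)$ I use point-process methods, and for $\kappa > 2$ a Markov chain central limit theorem together with a covariance computation.

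\emph{Stable regime $\kappa \in (0,2)$.} For a fixed truncation level $\gamma > 0$ set
\[
V_n^{(\gamma)} = \sum_{k=1}^n \frac{X_k}{a_n}\ind{X_k > a_n \gamma} - c_n^{(\gamma)},
\]
with $c_n^{(\gamma)} = 0$ when $\kappa \in (0,1)$, and $c_n^{(\gamma)} = n\E[(X_\infty/a_n)\ind{\gamma < X_\infty/a_n \leq 1}]$ when $\kappa \in [1,2)$. The functional $\mu \mapsto \int y \ind{y > \gamma} \mu(ds, dy)$ is vaguely continuous on point measures bounded away from zero, so applying the continuous mapping theorem to Theorem~\ref{thm:mainPP} (compensated by $c_n^{(\gamma)}$ when $\kappa \geq 1$) yields
\[
V_n^{(\gamma)} \dto V^{(\gamma)},
\]
with $V^{(\gamma)}$ a compensated Poisson sum over $N$ weighted by the cluster vectors $(Q_{ij})_j$. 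Standard facts about Poisson integrals with Lévy measure $\theta \kappa y^{-\kappa-1}\,dy$ give $V^{(\gamma)} \to V$ as $\gamma \downarrow 0$ (a.s.\ for $\kappa < 1$, in probability for $\kappa \in [1,2)$) and identify $V$ as a $\kappa$-stable random variable. The prelimit error $V_n - b_n - (V_n^{(\gamma)} - c_n^{(\gamma)})$ is essentially the centered sum of small values, which is negligible in $L^2$ by Lemma~\ref{lem:vsv} for $\kappa \in [1,2)$; for $\kappa < 1$ one bounds $a_n^{-1}\sum_k X_k \ind{X_k \leq a_n \gamma}$ directly via Markov's inequality at some $\alpha \in (\kappa,1)$ and Karamata's theorem. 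A standard double-limit argument (Billingsley, Theorem 3.2) then delivers $V_n - b_n \dto V$. I expect the main obstacle to be reconciling the explicit centering $b_n$ in the statement with $c_n^{(\gamma)}$ plus the centering implicit in $V^{(\gamma)}$; this is a careful Karamata-style computation that uses $\p(X_\infty > x) \sim C x^{-\kappa}$ and the defining relation $n \p(X_\infty > a_n) \to 1$, and is of course most delicate at the boundary index $\kappa = 1$.

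\emph{Gaussian regime $\kappa > 2$.} Since $\p(X_\infty > x) \sim C x^{-\kappa}$ with $\kappa > 2$, we have $\E X_\infty^{2+\delta} < \infty$ for some $\delta \in (0, \kappa-2)$. Lemma~\ref{lemma:X-ergod} yields geometric ergodicity of $(X_n)$, hence exponential $\beta$-mixing of the stationary version. Combined with the moment bound, Ibragimov's CLT (or the Markov chain CLT for geometrically ergodic chains) produces
\[
\frac{1}{\sqrt{n}}\sum_{k=1}^n \bigl(X_k - \E X_\infty\bigr) \dto \mathcal{N}(0, \sigma^2),
\]
with $\sigma^2 = \var(X_\infty) + 2 \sum_{k\geq 1}\cov(X_0, X_k)$. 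To identify $\sigma^2$, integrate out $\xi_{k+1}$ and the offspring variables in \eqref{eq:def-XZ} to obtain $\E[X_{k+1}\mid X_k] = (\E A) X_k + \E B$, where $\E A = \E m(\xi)$. The Markov property then gives the recursion $\cov(X_0, X_{k+1}) = \E A \cdot \cov(X_0, X_k)$, so $\cov(X_0, X_k) = (\E A)^k \var(X_\infty)$. Summing the geometric series,
\[
\sigma^2 = \var(X_\infty)\Bigl(1 + 2\sum_{k=1}^\infty (\E A)^k\Bigr) = \var(X_\infty)\,\frac{1+\E A}{1-\E A},
\]
which is strictly positive: $\var(X_\infty) > 0$ because $X_\infty$ is nondegenerate, and $\E A = \E m(\xi) < 1$ follows from strict convexity of $t \mapsto \E m(\xi)^t$ on $[0,\kappa]$ together with $\E m(\xi)^0 = \E m(\xi)^\kappa = 1$ and $\kappa > 1$.
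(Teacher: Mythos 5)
Your overall architecture matches the paper's: point process convergence plus a vanishing small values condition for $\kappa<2$, and a Markov chain CLT with a covariance computation for $\kappa>2$. The minor differences are worth noting. For $\kappa<2$ you re-derive the relevant half of Davis--Hsing's Theorem~3.1 by hand (truncation at level $\gamma$, continuous mapping on the point process, a double limit), whereas the paper simply cites Theorem~3.1 in \cite{DH95}; your self-noted worry about "reconciling $b_n$ with $c_n^{(\gamma)}$" is actually dissolved by your own definition of $c_n^{(\gamma)}$, since $V_n - b_n - (V_n^{(\gamma)} - c_n^{(\gamma)})$ is exactly the centered small-value sum that Lemma~\ref{lem:vsv} controls. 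For $\kappa>2$ your covariance recursion $\cov(X_0,X_{k+1}) = \E A \cdot \cov(X_0,X_k)$ via $\E[X_{k+1}\mid X_k] = (\E A) X_k + \E B$ and the Markov property is cleaner than the paper's route, which instead writes out the decomposition $X_t = \sum_{i=1}^{X_0}\widetilde A_i^{(t)} + \sum_{k=0}^{t-1} C_{t,k}$ and computes $\E(X_0\sum_i\widetilde A_i^{(t)}) = (\E A)^t \E X_0^2$ directly. Both give $\cov_\pi(X_0,X_t)=(\E A)^t\var_\pi(X_0)$, and your argument for $\E A<1$ via strict convexity of $t\mapsto\E m(\xi)^t$ and $\var(X_\infty)>0$ via the nondegenerate regularly varying tail is sound (the paper instead observes directly that the fixed point equation has no deterministic solution).

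The one genuine gap is the extension from the stationary chain to an arbitrary initial distribution, which the theorem explicitly claims. Theorem~\ref{thm:mainPP} already covers arbitrary initializations, but Lemma~\ref{lem:vsv} is proved only for the stationary chain, so your $\kappa\in[1,2)$ argument establishes \eqref{eq:StLim} only under $\pi$. Likewise the Markov chain CLTs you invoke for $\kappa>2$ are typically stated under stationarity or require an additional argument to handle general $\zeta$. The paper closes this by a short coupling step: write $V_n = \sum_{k=1}^{n-m} X_{k+m}/a_n - b_n + r_{n,m}$ with $r_{n,m}\to 0$ for fixed $m$, then compare the characteristic function under $\zeta P^m$ to the one under $\pi$, with error controlled by $\|\zeta P^m - \pi\|_{TV}\to 0$ (which follows from $V$-geometric ergodicity in Lemma~\ref{lemma:X-ergod} via Harris recurrence). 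You should add an analogous paragraph; without it, the claim for general $X_0$ is unproved.
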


\begin{remark}\label{rem:Stable}
Recall that under the conditions of Theorem \ref{thm:nonarith}
for the normalizing sequence we may choose
 $a_n = (Cn)^{1/\kappa}$, in general we have
$a_n = \widetilde \ell(n) n^{1/\kappa}$, for some 
slowly varying $\widetilde \ell$, as observed  after \eqref{eq:def-a}.
Concerning the centering constants $(b_n)$, recall that the
the law of $X_\infty$ represents the unique
stationary distribution for b.p.i.r.e.~satisfying \eqref{eq:def-X}.
For $\kappa\in (1,2)$, the mean of $X_\infty$  is finite,  so one could 
substitute centering constants $(b_n)$  by $ n \E X_\infty /a_n$ to show
\begin{equation} \label{eq:DiffCent}
\frac{1}{a_n}   \sum_{j=1}^n (X_i - \E X_\infty ) \dto V - 
\frac{\kappa}{\kappa-1}\,.
\end{equation}
Thus, we again get a $\kappa$-stable limit with different location parameter,
cf.~Remark 3.1 in \cite{DH95}. 
Moreover, under conditions of Theorem~\ref{thm:nonarith}, in the case $\kappa=1$,
 $b_n \sim C^{-1} \log n $.
\end{remark}

\begin{remark} \label{rem:StableLaw}
One can also describe the limiting  stable distribution in 
\eqref{eq:StLim} in terms of parameters $\kappa\,, \theta$ and the distribution of 
$(Q_j)$, see  Remark 3.2 in \cite{DH95} for details.
In the case $\kappa<1$ for instance, the limiting random variable $V$  has a
characteristic function of relatively simple form
\begin{equation} \label{eq:ChFun}
\E e^{i  t V}  = 
\exp \left( 
- d |t|^\kappa \left( 1- i\, \text{sgn}(t) \tan \frac{\pi \kappa}{2}\right) \right) \,,
\end{equation}
where $d = \theta \, \Gamma(1-\kappa) \,
\E(\sum_j Q_j)^\kappa  \cos (\pi \kappa /2)$, which is known to be  
finite since for  
$\kappa\leq 1$,  $\E(\sum_j Q_j)^\kappa \leq  \E (\sum_j Q_j^\kappa ) < \infty$,
see \cite[Remark 3.2, Theorem 3.2]{DH95}.

For $\kappa \in (1,2)$, multiplicative random walk $(\Theta_j)$ given by 
\eqref{eq:TailFor} and
\eqref{eq:TailBack} satisfies
$\E (\sum_j \Theta_j)^{\kappa-1}  \leq \E (\sum_j \Theta_j^{\kappa-1} ) < 
\infty$. 
By Theorem 5.5.1 in Kulik and Soulier~\cite{KS19}, this yields
$\E(\sum_j Q_j)^\kappa < \infty$. Applying Proposition 8.3.2 and Theorem 
8.4.3   in 
\cite{KS19},
and observing that they use $ n \E X_\infty /a_n$ as centering constants,
one can deduce that
\begin{equation*} 
\E e^{i  t V}  = 
\exp \left( 
- d |t|^\kappa \left( 1- i\, \text{sgn}(t) \tan \frac{\pi \kappa}{2}\right)
+ i ct \right) \,,
\end{equation*}
with the same expression for the scale parameter $d$ and with  $c= 
\kappa/(\kappa-1)$, cf. \eqref{eq:DiffCent}.

For $\kappa =1$, $(\Theta_j)$ satisfies
$\E [\log (\sum_j \Theta_j) ] \leq \E (\sum_{j} \Theta_j)^{1/2}  < \infty$. 
By Chapter 5 in Kulik and Soulier~\cite{KS19} (see Problem 5.31 therein), this 
implies
$\E(\sum_j Q_j \log Q^{-1}_j )< \infty$. This time,  Theorem 8.4.3   in 
\cite{KS19} yields
$$
\E e^{i  t V}  = 
\exp \left( 
- d |t| \left( 1+ i\, \frac{2}{\pi} \text{sgn}(t) \log |t| \right)  + i c t
\right) \,,
$$
where $d = \theta \frac{\pi}{2} \E(\sum_j Q_j)$ while an expression for the location 
parameter $c$ can be found in Proposition 8.3.3 of \cite{KS19}.

Note finally,  that for $\kappa > 2$ one can easily check that the 
conditions of Theorem 19.1 in Billingsley \cite{Billingsley} hold, therefore
the functional version of the CLT also holds.
\end{remark}  

\begin{proof}
Assume first that $\kappa < 2$.
As in the proof of Theorem~\ref{thm:mainPP}, we first assume that 
$(X_t)$ is a stationary b.p.i.r.e.~process satisfying the conditions of the theorem. The 
claim then follows directly from Theorem~\ref{thm:mainPP} and  Theorem 3.1 in Davis and 
Hsing~\cite{DH95}. Observe that the condition (3.2) therein 
follows from Lemma~\ref{lem:vsv}.

To prove the theorem when $X_0$ has an arbitrary initial distribution we can use 
 a similar argument
as in the proof of Theorem~\ref{thm:mainPP}.
 Observe first, that 
$$
V_n = \sum_{k=1}^{n} \frac{X_{k}}{a_{n}} - b_n  
 =  \sum_{k=1}^{n-m} \frac{X_{k+m}}{a_{n}} - b_n + r_{n,m}
$$
 with $r_{n,m} \to 0$ in probability as $n \toi$ for any fixed $m$. 
 Denote by
 $\zeta P^m$  the distribution of $X_m$, and note that for any $s \in \R$ 
\begin{align*}
 &	\E_\zeta \left[ \exp \left( i s \left(
	\sum_{k=1}^{n-m}  \frac{X_{k+m}}{a_{n}} - b_n\right)
	\right) \right]  =
	\E_{\zeta P^m} \left[ \exp \left( i s \left(
	\sum_{k=1}^{n-m} \frac{X_{k}}{a_{n}}  - b_n\right)
	\right) \right]\\
	&
	=
	\E_{\pi} \left[ \exp \left( i s \left(
	\sum_{k=1}^{n-m} \frac{X_{k}}{a_{n}}  - b_n\right)
	\right) \right]   + u_{n,m}\,,
\end{align*}
where for all $n$
\[
 u_{n,m} \leq \|  \zeta P^m  - \pi  \|_{TV} \,,
\]
with the right hand side tending to 0 as $m \toi $. Observe now that a stationary 
b.p.i.r.e. $(X_t)$  satisfies   $r'_{n,m} 
=\sum_{k=n-m+1}^{n} \frac{X_{k}}{a_{n}}  \to 0$ in probability as $n \toi$. Therefore, it 
also satisfies
$$
\sum_{k=1}^{n-m} \frac{X_{k}}{a_{n}} - b_n = \sum_{k=1}^{n} \frac{X_{k}}{a_{n}} -
b_n   -r'_{n,m} \dto V\,.
$$

\smallskip

Let now $\kappa > 2$. Then $\var (X_\infty)< \infty$, and the result follows 
from the standard Markov chain theory.
To apply Theorem 1 (i) in Jones \cite{Jones} we have to prove the drift 
condition as in Lemma \ref{lemma:X-ergod} with $V(x) = 1 + x^2$.
Simple calculation gives 
\[
\E \bigg( \sum_{i=1}^x A_i + B \bigg)^2 
= x^2 \E m(\xi)^2 + x \left( \E A^2 - \E m(\xi)^2 + 2 \E AB
\right) + \E B^2.
\]
Since $\E m(\xi)^2 < 1$, there exist $\beta \in (0,1)$, $b>0$, and 
$x_0 \in \N$ such that for all $x \in \N$
\[
\E_x [ V(X_1) ]= \E [ V(X_1) | X_0 = x ] \leq \beta V(x) + b \ind{ x \leq x_0 }.
\]
Moreover, the level set $ M=\{x : x \leq x_0 \}$ is small, so 
the conditions in Theorem 1 (i) in Jones \cite{Jones} hold.

We only have to check that 
$\sigma^2 = 
\var_\pi(X_0) + 2 \sum_{t=1}^\infty \mathrm{Cov}_\pi(X_0, X_t) > 0$.
Using decomposition \eqref{eq:stat-decomp} from the proof of 
Lemma \ref{lemma:anticluster} we see that
\[
\cov_\pi ( X_0, X_t ) = \cov_{\pi} 
\Big( X_0, \sum_{i=1}^{X_0} \widetilde  A_i^{(t)} \Big),
\]
where $\widetilde A_i^{(t)}$ stands for the number of descendants 
in generation $t$ of a single individual in generation 0. Thus
\[
\E \left( X_0 \sum_{i=1}^{X_0} \widetilde A_i^{(t)} \right) =
( \E A )^t \E X_0^2. 
\]
Therefore
\[
\cov_\pi ( X_0, X_t ) = \var_\pi(X_0) \, (\E A)^t, 
\]
and the formula $\sigma^2 = \var_\pi(X_0) (1 + \E A) / (1 - \E A)$
follows. 

To show that $\sigma > 0$ we have to prove that the stationary solution
cannot be deterministic. Indeed, the fixed point equation 
\eqref{eq:fixed} (or \eqref{eq:fixed2}) has no deterministic solution,
since Cram\'er's condition \eqref{eq:ass-cramer} implies $\p (A > 1) >0$,
thus $\p (\sum_{i=1}^m A_i + B >  m) > 0$ for any  positive integer $m$.
\end{proof}

\section{Random walks in a random environment}\label{sec:RWRE}

Connection between random walks in a random environment
and branching processes with immigration was made already
in the seminal paper by
Kesten et al.~\cite{KKS}.
To describe the setup of their model, we can use again 
a sequence of i.i.d.~random variables
$\xi\,,(\xi_t)_{t \in \Z}$ with values in the interval $(0,1)$.
Consider the process
$W_0=0$ and 
\begin{equation} \label{eq:rwre}
	W_t = W_{t-1} + \eta_{W_{t-1},t}  
\end{equation}
where conditioned on the environment $\mathcal{E}$, $\eta_{x,t}$'s
are independent random variables taking value $+1$ with probability
$\xi_x$ and value $-1$ with probability $\xi'_x = 1 -\xi_x$.
The process $(W_t)$ is called random walk in random environment -- r.w.r.e.

Following \cite{KKS} we assume throughout that
 \begin{equation} \label{eq:rwre-drift}
 \E \log \frac{\xi'}{\xi} < 0
 \end{equation}
and 
 \begin{equation} \label{eq:rwre-cramer}
  \E \left( \frac{\xi'}{\xi} \right)^\kappa =1
  \quad \mbox{ and } \quad 
  \E \left( \frac{\xi'}{\xi} \right)^\kappa  
  \log \frac{\xi'}{\xi}
 < \infty\,,
 \end{equation}
for some $\kappa >0$. 
The first of these conditions ensures that $(W_t)$  drifts to $+\infty$
and the second one corresponds to \eqref{eq:ass-cramer}.
As observed in \cite{KKS}, for the asymptotic analysis of $(W_t)$,
it is crucial to understand asymptotic behavior of the random variables
$$
 T_n = \min  \{ t : W_t = n \}, \quad n \geq 1\,,
$$
which are all finite a.s. 
Observe that on the way to the state $n$, process $(W_t)$ visits
each state $k=0,1,\ldots, n-1$ at least once. 
For $i=1,\ldots, n$  denote 
$$
 L^n_i = 1 + U^n_i = 1 + \#  \{ \mbox{visits to the state } n- i  
 \mbox{ from the right} \mbox{ before }  T_n \}.
$$
Note, until time $T_n$ each of the visits to $n-i$ from the right is
canceled by one movement back to the right. Note 
$U^n_i$ are well defined and possibly different from 0 also
for $i>n$. Observe next that the total number of visits by the
process $(W_t)$ to the left of 0, say 
$2 U_{\infty} $, is a.s.~finite. Thus,
$ R_n = 2 \sum _{i>n}   U_i^n \leq  2 U_{\infty}$\,.
Thus in
$$
   T _n = n + 2  \sum _{i\geq 1 }   U_i^n
   = n + 2  \sum _{i=1 }^n   U_i^n + R_n,
$$
the term $R_n$ remains bounded as $n\toi$.

Observe next that  $U_1^n = 0$, so that $L_1^n = 1$  and the
sequence $L_i^n$ evolves as follows
$$
 L_2^n = G_{n,1,1} + 1,
$$
where $G_{n,1,1}$ represents the number of right visits to $n-2$ from $n-1$ before 
eventual move to the right. Conditioned on $\mathcal{E}$,
$G_{n,1,1}$ has a geometric distribution with mean
$\xi'_{n-1} /\xi _{n-1}$. Similarly, for each $i \geq 2$
\begin{equation} \label{eq:Lbpire}
L_{i}^n = \dsum_{j=1}^{ L_{i-1}^n}  G_{n,i-1,j} + 1\,,
\end{equation}
where conditionally on $\mathcal{E}$,
$G_{n,{i-1},j}$, $j=1,2,\ldots$, are i.i.d.~with a geometric distribution with mean
$\xi'_{n-i+1} /\xi _{n-i+1}$.

The finite sequences 
$$
L^n_1,L^n_2,\ldots, L^n_k,\ldots, L^n_n,
$$ 
represent a special case of branching process in random environment.
Moreover,  the initial part $L^n_1,L^n_2,\ldots, L^n_k$
has the same distribution for each $n\geq k$, hence if we want to understand the limiting distribution of
$ T_n  = 2  \sum _{i=1 }^n   L_i^n -n +R_n\,,$
we can simply skip the index $n$ in \eqref{eq:Lbpire}
and analyze $\sum _{i=1 }^n   L_i$, 
which is exactly the content of  Theorem~\ref{thm:CLT}. Denote in the sequel
by $L_\infty$ the random variable which has the stationary distribution
of the Markov chain in \eqref{eq:Lbpire}.
The following theorem corresponds to the part of the main theorem in \cite{KKS}
concerning the case $\kappa<2$. The corresponding statement 
for $\kappa >2 $ follows from
Theorem~\ref{thm:CLT} as well, in that case $T_n$ after centering and
normalization with $\sqrt n$ converges to a normal distribution. Observe
that in the case $\kappa = 2$,  $T_n$ converges to a normal limit again,
but additional care has to be taken about normalizing and centering constants,
see \cite{KKS}.

\begin{theorem} \label{thm:RWRE} 
Let $(W_t)_{t\geq 0}$ be a  random walk  in random environment  satisfying 
\eqref{eq:rwre}.
Suppose that the conditions of \eqref{eq:rwre-drift}
and \eqref{eq:rwre-cramer} hold for some
$\kappa>0$. 
Suppose further that the law of $\log ( {\xi'_i}/{\xi_i})$ is nonarithmetic. 
\begin{itemize}
\item[(i)]  For $\kappa \in (0,1)$ as $n\toi$
$$
\frac{1}{n^{1/\kappa}} T_n \dto \widetilde V,
$$
where $\widetilde V $ has a strictly positive $\kappa$--stable distribution,
while, as $t \toi$
$$
\frac{1}{t^{\kappa}} W_t \dto \widetilde V ^{-\kappa}. 
$$

\item[(ii)]  For $\kappa =1 $
$$
\frac{1}{n}  T_n - 2 C  b_n \dto \widetilde V,
$$
where $b_n \sim  \log n$ and 
$\widetilde V $ has a $1$-stable distribution. 
Moreover, as $t \toi$
$$
\frac{(\log t)^2}{t} ( W_t - \delta(t) ) \dto  
\frac{  - \widetilde V }{(2C)^{2}},
$$
with $\delta(t) \sim t / (2 C \log t)$.

\item[iii)]  For $\kappa \in (1,2)$
$$
\frac{1}{n^{1/\kappa}} ( T_n - n  (2 \E L_\infty - 1) ) \dto \widetilde V.
$$
Moreover, as $t \toi$
$$
\frac{1}{t^{1/\kappa}} 
\left( W_t - \frac{t}{2 \E L_\infty - 1} \right) \dto
\frac{  - \widetilde V }{(2 \E L_\infty - 1)^{1+1/\kappa}}.
$$
\end{itemize}
\end{theorem}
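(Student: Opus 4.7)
The plan is to represent $T_n$ in terms of a single branching process with immigration in random environment and then apply Theorem \ref{thm:CLT}. Consider the Markov chain $(L_i)_{i \geq 1}$ with $L_1 = 1$ evolving by \eqref{eq:Lbpire} with the index $n$ suppressed: conditionally on the environment $\xi_i$, offspring are geometric with mean $m(\xi_i) = \xi'_i/\xi_i$, and the immigrant equals $1$ deterministically. Condition \eqref{eq:rwre-drift} gives subcriticality \eqref{eq:ass-subcrit}, and \eqref{eq:rwre-cramer} provides both the Cram\'er identity \eqref{eq:ass-cramer} and the logarithmic moment of $m(\xi)^\kappa$. The side conditions of Lemma \ref{lemma:Goldie-cond} are automatic for geometric offspring by Remark \ref{rem:PoiGeo}, nonarithmeticity is assumed, and trivially $\E B^\kappa = 1 < \infty$. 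Hence Theorem \ref{thm:nonarith} yields a regularly varying stationary distribution with index $\kappa$, and Theorem \ref{thm:CLT} applies to $\sum_{i=1}^n L_i$.

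By the construction in the paragraphs preceding the theorem, $T_n = 2\sum_{i=1}^n L_i^n - n + R_n$ with $0 \leq R_n \leq 2 U_\infty < \infty$ a.s., and $(L_1^n, \ldots, L_n^n) \eind (L_1, \ldots, L_n)$. Since $R_n$ is bounded, $R_n/n^{1/\kappa} \to 0$, and we can substitute $L_i$ for $L_i^n$ inside the partial sum when studying distributional limits. Normalizing by $n^{1/\kappa}$, which differs from $a_n = (Cn)^{1/\kappa}$ of Theorem \ref{thm:CLT} only by a constant factor absorbed into $\widetilde V$, we read off the three regimes directly from Theorem \ref{thm:CLT}. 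In case (i), $\kappa < 1$, the term $-n$ vanishes after scaling and $T_n/n^{1/\kappa} \dto \widetilde V$, with $\widetilde V$ positive because the underlying sum is positive. In case (iii), $1 < \kappa < 2$, using the centering $n \E L_\infty/a_n$ as in Remark \ref{rem:Stable}, $(T_n - n(2\E L_\infty - 1))/n^{1/\kappa} \dto \widetilde V$. In case (ii), $\kappa = 1$, the centering $b_n \sim \log n$ of Theorem \ref{thm:CLT} gives $T_n/n - 2 C b_n \dto \widetilde V$.

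The statements about $W_t$ follow by inverting the standard identity $\{W_t \geq n\} = \{T_n \leq t\}$. For case (i), set $n = \lfloor x t^\kappa \rfloor$ so that $t/n^{1/\kappa} \to x^{-1/\kappa}$, giving $\p(W_t/t^\kappa \leq x) = \p(T_{n+1} > t) \to \p(\widetilde V > x^{-1/\kappa}) = \p(\widetilde V^{-\kappa} \leq x)$. For case (iii), write $T_n \approx n \mu + n^{1/\kappa} \widetilde V_n$ with $\mu = 2\E L_\infty - 1$ and $\widetilde V_n \dto \widetilde V$; a first-order inversion gives $W_t/t \to 1/\mu$ in probability, and a Slutsky-type second-order expansion yields $(W_t - t/\mu)/t^{1/\kappa} \dto -\widetilde V/\mu^{1 + 1/\kappa}$.

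The main obstacle is case (ii), where $T_n \sim 2 C n \log n$ grows faster than linearly and the inversion requires logarithmic bookkeeping. Solving $2 C n \log n \approx t$ produces the implicit normalizing sequence $\delta(t) \sim t/(2C \log t)$; then differentiating the map $n \mapsto 2C n \log n$ at $n = \delta(t)$ gives slope $2C(\log \delta(t) + 1) \sim 2C \log t$. A delta-method style expansion of $T_{W_t} \approx t$ around $n = \delta(t)$ converts the fluctuation $T_n - 2 C n b_n \sim n \widetilde V$ into $W_t - \delta(t) \sim -\delta(t) \widetilde V/(2C \log t) \sim -t \widetilde V/((2C)^2 (\log t)^2)$. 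Cleanly justifying this substitution---in particular, that $\log W_t / \log \delta(t) \to 1$ in probability and that $\widetilde V_{W_t}$ retains the same limiting distribution as $\widetilde V$---is the most delicate technical point and would occupy the bulk of the written proof.
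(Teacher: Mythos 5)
Your proposal follows essentially the same route as the paper's proof: reduce $T_n$ to the partial sums of the geometric-offspring, unit-immigration b.p.i.r.e.\ $(L_i)$, verify the hypotheses of Theorems~\ref{thm:nonarith} and~\ref{thm:CLT} via Remark~\ref{rem:PoiGeo}, choose the centering according to the three regimes, and then invert to pass from $T_n$ to $W_t$ --- a step the paper itself simply delegates to Kesten et al.~\cite{KKS}. One small correction to your sketch of that inversion: the exact identity is $\{T_n \le t\} = \{\max_{s\le t} W_s \ge n\}$ rather than $\{W_t \ge n\}$, and one also needs tightness of $\max_{s\le t} W_s - W_t$ (negligible at the normalizations $t^{\kappa}$, $t/(\log t)^2$, $t^{1/\kappa}$), which is exactly what the argument in \cite{KKS} supplies.
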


\begin{proof}
Recall that 
$T_n 
\eind  \sum _{i=1 }^n  (2 L_i -1)  +R_n\,,$
with random variables  $R_n$ a.s.~bounded and where $(L_n)$ represents a special 
b.p.i.r.e.~which is initialized at one,   
with progeny conditionally geometric 
given the environmental variables $\xi_j$'s and with the immigration
identically equal to $1$. Recall further that the conditions of 
Theorems~\ref{thm:nonarith}  and \ref{thm:CLT} are met by assumptions 
\eqref{eq:rwre-drift}, \eqref{eq:rwre-cramer} and Remark \ref{rem:PoiGeo}. 

Define now
$$
 c_n = \left\{
 \begin{array}{ll}
  0 & \mbox{ for } \kappa \in (0,1)\,,\\
    2{\E L_\infty \ind{ L_{\infty}\le (Cn)^{1/\kappa}  } } -1  & \mbox{ for } \kappa = 1\,,\\
   2{\E L_\infty} -1  & \mbox{ for }\kappa \in (1,2). 
 \end{array}
 \right.
$$
Theorem~\ref{thm:CLT} and  Remark~\ref{rem:Stable} imply that
${2^{-1} (Cn)^{-1/\kappa}} \sum _{i=1 }^n  (2 L_i -1 - c_n )$
converges in distribution to  a $\kappa$-stable random variable for all 
$\kappa \in (0,2)$.
After  multiplication by $2 C^{1/\kappa}$, this yields
$$
  \frac{1}{n^{1/\kappa}} ( T_n  - n c_n)  \dto \widetilde V \,,
$$
for a certain $\kappa$-stable random variable $ \widetilde V$.
Using the particular form of constants $c_n$  gives  the  centering
sequence in each of the three cases. 
The convergence in distribution of 
the r.w.r.e.~$W_t$ now follows as in Kesten et al.~\cite{KKS}.
\end{proof}

\begin{remark}
Observe that the characteristic function, and the exact distribution therefore,
of the limit $\widetilde V$ in all three statements of the theorem
can now be read out from Remark~\ref{rem:StableLaw}. In particular, 
location and scale parameters of the stable law
$\widetilde V$ are now determined in terms of the 
values $\kappa$, $\theta$ and the conditional multiplicative random walk $(Q_j)$.
\end{remark}

The analogous results hold under the second condition in \eqref{eq:main-ass}. We translate 
this  to the RWRE setup in the following theorem. 
Recall first that normalizing sequence $(a_n)$ from \eqref{eq:def-a} 
satisfies $a_n = n^{1/\kappa} \widetilde \ell(n)$ for some slowly varying function 
$\widetilde \ell$, and extend this to a function
$a(t) = t^{1/\kappa} \widetilde \ell(t)$ on $(0,\infty)$.

\begin{theorem} \label{thm:RWRE2}
Let $(W_t)_{t\geq 0}$ be a  random walk in random environment satisfying 
either condition (i) or (ii) in Theorem \ref{thm:nonarith2} 
for $\kappa \in (1,2)$ with 
$m(\xi) = \xi' / \xi$.
Suppose further that the law of $\log ( {\xi'_i}/{\xi_i})$ is nonarithmetic. 
Then
$$
\frac{1}{a_n} ( T_n - n  (2 \E L_\infty - 1) ) \dto \widetilde V
\quad \text{as } \, n \to \infty.
$$
Moreover, as $t \toi$
$$
\frac{1}{a(t)} 
\left( W_t - \frac{t}{2 \E L_\infty - 1} \right) \dto
\frac{  - \widetilde V }{(2 \E L_\infty - 1)^{1+1/\kappa}}.
$$
\end{theorem}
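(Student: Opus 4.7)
The plan is to follow the proof of Theorem~\ref{thm:RWRE} case (iii) line by line, replacing the power normalization $n^{1/\kappa}$ by the general regularly varying sequence $a_n$ of \eqref{eq:def-a}, and verifying that the hypotheses of Theorem~\ref{thm:CLT} available to us come now from Theorem~\ref{thm:nonarith2} rather than Theorem~\ref{thm:nonarith}. First I would reuse the decomposition $T_n \eind \sum_{i=1}^n (2 L_i - 1) + R_n$ with $R_n$ a.s.\ bounded, where $(L_n)$ is the b.p.i.r.e.\ in \eqref{eq:Lbpire} with conditionally geometric offspring and immigration identically equal to $1$. By Remark~\ref{rem:PoiGeo}, condition (i) of Lemma~\ref{lemma:Goldie-cond} reduces to $\E m(\xi)^\kappa < \infty$, which is part of the hypothesis, and $\E B^\nu < \infty$ is trivial since $B \equiv 1$. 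Since $\kappa \in (1,2)$, the $\kappa \geq 1$ clause of Theorem~\ref{thm:nonarith2} supplies $C > 0$, so \eqref{eq:main-ass} is in force.

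Second, I would invoke Theorem~\ref{thm:CLT} for the b.p.i.r.e.\ $(L_n)$ to get $a_n^{-1}\sum_{k=1}^n L_k - b_n \dto V$ for some $\kappa$-stable $V$. Since $\E L_\infty < \infty$, Remark~\ref{rem:Stable} allows recentering by $n\E L_\infty/a_n$, giving
\[
\frac{1}{a_n}\sum_{k=1}^n (L_k - \E L_\infty) \dto V - \frac{\kappa}{\kappa-1}.
\]
Multiplying by $2$, absorbing $R_n/a_n \to 0$ by Slutsky, and writing $\widetilde V := 2(V - \kappa/(\kappa-1))$ and $v := 2\E L_\infty - 1$, this yields the first claim
\[
\frac{1}{a_n}\bigl(T_n - n v\bigr) \dto \widetilde V.
\]

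Third, I would invert as in Kesten, Kozlov and Spitzer~\cite{KKS} to pass from $T_n$ to $W_t$. Using $W_{T_n} = n$ and the law of large numbers $W_t/t \to 1/v$ a.s.\ (which follows from $T_n/n \to v$), the heuristic $T_n \approx nv + a_n \widetilde V$ evaluated at $n = W_t$ gives $W_t \approx t/v - a_{W_t} \widetilde V / v$. Since $a$ is regularly varying with index $1/\kappa$ and $W_t \sim t/v$ in probability, Potter's bounds yield $a_{W_t}/a(t) \to v^{-1/\kappa}$ in probability, and a Slutsky argument produces
\[
\frac{1}{a(t)}\left(W_t - \frac{t}{v}\right) \dto \frac{-\widetilde V}{v^{1+1/\kappa}}.
\]
The main obstacle, and the only place where this proof is more delicate than that of Theorem~\ref{thm:RWRE}(iii), is making this inversion rigorous under the slowly varying correction $\widetilde \ell$ built into $a_n$: one needs a joint convergence/continuous mapping argument that pairs $a_n^{-1}(T_n - nv) \dto \widetilde V$ with the uniform regular variation of $a(\cdot)$ to justify the substitution $a_{W_t} \leadsto a(t) v^{-1/\kappa}$. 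Once that step is carried out, the second claim follows.
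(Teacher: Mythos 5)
Your proof is correct and takes essentially the same route as the paper, which disposes of this theorem in one sentence by citing Theorem~\ref{thm:CLT} for the first display and the inversion identity from \cite{KKS} (their (2.38)) for the second. You have usefully filled in what that one sentence leaves implicit: the verification that Remark~\ref{rem:PoiGeo} plus $B\equiv 1$ plus the $\kappa>1$ clause of Theorem~\ref{thm:nonarith2} give \eqref{eq:main-ass}, the recentering via \eqref{eq:DiffCent}, and the absorption of $R_n/a_n$.

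One small correction to your closing remark: the slowly varying factor in $a_n$ does not actually introduce any extra delicacy into the inversion, and no joint convergence or continuous mapping device is required. The classical KKS argument works via the deterministic identity $\{T_n \le t\}=\{W_t\ge n\}$. Taking $n=n(t)\sim t/v + x\,a(t)/v^{1+1/\kappa}$ (a deterministic sequence), one writes
\[
\p\bigl(T_{n(t)}\le t\bigr)
=\p\!\left(\frac{T_{n(t)}-n(t)v}{a_{n(t)}}\le \frac{t-n(t)v}{a_{n(t)}}\right),
\]
and the right-hand threshold converges to $-x$ because $t-n(t)v\sim -x\,a(t)/v^{1/\kappa}$ while $a_{n(t)}\sim a(t/v)\sim a(t)/v^{1/\kappa}$, the last step being the uniform convergence theorem for regularly varying functions applied to the \emph{deterministic} argument $n(t)$. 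No random index ever enters the regular-variation estimate, so Potter bounds and convergence in probability of $a(W_t)/a(t)$ are not needed; the argument goes through verbatim for any regularly varying $a$.
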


\begin{proof}
The first part follows again from Theorem \ref{thm:CLT}, while the 
second part follows from the inverse relation between $T_n$ and $W_t$;
see (2.38) in \cite{KKS}. We omit the details.
\end{proof}

\begin{example}
Assume that the random variable $\xi$ 
satisfies $\E (\xi'/\xi)^\kappa = 1$, and has a
 density function $f$ such 
that   for all $u > 0$ small enough
\[
f(u) = u^{\kappa - 1} \alpha \frac{\log \log u^{-1}}{(\log u^{-1} )^{\alpha + 1}},
\]
where $\kappa \geq 1$, $\alpha \in (1/2, 1)$. Then 
straightforward calculation shows that
\[
\E \ind{\log \frac{1-\xi}{\xi} > x} \left( \frac{1-\xi}{\xi} \right)^\kappa 
= \int_0^{(1+e^x)^{-1}} \left( \frac{1-u}{u} \right)^\kappa  f(u) \dd u 
\sim \frac{\log x}{x^\alpha}.
\]
Thus, the conditions of the theorem above holds, and for $M$ in \eqref{eq:M-def}
\[
 M(x) \sim \frac{x^{1-\alpha} \log x}{1- \alpha}.
\]
Therefore by Theorem~\ref{thm:nonarith2}, the stationary distribution $L_\infty$ satisfies
\[
\p (L_\infty > x) \sim 
\frac{C (1-\alpha)}{x^\kappa  (\log x)^{1-\alpha} \log \log x}.
\]
Then 
\[
a_n \sim [\kappa^{1-\alpha} C (1-\alpha)]^{1/\kappa}
\frac{n^{1/\kappa}}{(\log n)^{(1-\alpha)/\kappa} (\log \log n)^{1/\kappa}}.
\]
\end{example}

According to Kesten et al.~\cite{KKS} even before they gave the proof of the theorem, it 
was conjectured by A.N.~Kolmogorov and F.~Spitzer that $T_n$ might exhibit the behavior 
described above. The intuitive reason behind this observation may be the existence of so-called traps between 0 
and $n$, i.e.~sites
$j\in \{0,\ldots,  n\}$ where corresponding $\xi_j$ is atypically small,  which
makes it very difficult for the random walk $(W_t)$ to cross over to the right.
It is interesting that our other main result, Theorem~\ref{thm:mainPP}, provides a very 
simple argument characterizing the asymptotic distribution of the worst of 
such traps. 
Denote now for $k < n$
by
$$
V^n_k = \# \{\mbox{crossings over the edge } (k,k+1) \mbox{ before }
T_n \}\,.
$$
Clearly, for $k= 0,\ldots, n-1$, we have  $V_k=1+ 2 U^n_{n-k}$ and
while for $k<0$  $V_k=2 U^n_{n-k}$. 
Observing that $\max_{k<0}  V^n_k $ remains bounded a.s.~again,
from  Corollary~\ref{cor:Max}  we can deduce the following result
concerning the most visited edge until time $T_n$.

\begin{corollary}
Under the assumptions of Theorem~\ref{thm:RWRE} or Theorem~\ref{thm:RWRE2}
\[
\p \left(\frac{ \max_{k<n}  V^n_k }{ 2 a_n} \leq x \right) \to
e^{-\theta x^{-\kappa}} \,,
\]
as $n \toi$.
\end{corollary}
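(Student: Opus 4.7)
The plan is to reduce the claim to an application of Corollary~\ref{cor:Max} for the specific b.p.i.r.e.~$(L_i)$ introduced in Section~\ref{sec:RWRE}. First I would rewrite $V^n_k$ in terms of the variables $L^n_i$ that already appear in the proof of Theorem~\ref{thm:RWRE}. For $k=0,\ldots,n-1$ the identity $V^n_k = 1 + 2U^n_{n-k} = 2L^n_{n-k} - 1$ follows at once from the definitions, so
\[
\max_{0 \leq k \leq n-1} V^n_k = 2 \max_{1 \leq i \leq n} L^n_i - 1.
\]
As recorded in the paper, the joint law of $(L^n_1,\ldots, L^n_n)$ coincides with that of the initial segment $(L_1,\ldots,L_n)$ of the b.p.i.r.e.~driven by \eqref{eq:Lbpire} started at $L_1 = 1$, so this maximum has the same distribution as $M_n^L := \max\{L_1,\ldots,L_n\}$.

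Next, I would handle the $k<0$ contribution. For such $k$, $V^n_k = 2U^n_{n-k}$ is bounded above by twice the total number of visits of $(W_t)$ strictly to the left of $0$, i.e.~by the a.s.~finite random variable $2U_\infty$ introduced in the same section. Hence
\[
\max_{k < n} V^n_k = \bigl(2 M_n^L - 1\bigr) \vee R'_n, \qquad R'_n \leq 2 U_\infty \text{ a.s.}
\]
Dividing by $2 a_n$, the second term vanishes in probability because $a_n \to \infty$, while by Slutsky's lemma the asymptotics of $\max_{k<n} V^n_k / (2 a_n)$ coincide with those of $M_n^L/a_n$.

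The final step is to invoke Corollary~\ref{cor:Max} applied to the branching process $(L_i)$ itself. Under the hypotheses of Theorem~\ref{thm:RWRE} or Theorem~\ref{thm:RWRE2}, condition \eqref{eq:main-ass} holds for this particular (conditionally geometric, immigration $\equiv 1$) b.p.i.r.e.\ -- this was already verified in the proofs of those theorems via Remark~\ref{rem:PoiGeo}. Therefore Corollary~\ref{cor:Max} yields $\p(M_n^L/a_n \leq x) \to e^{-\theta x^{-\kappa}}$, and combining with the preceding reduction gives the claim.

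There is no real obstacle here beyond careful bookkeeping; the only minor point to verify is that the a.s.~boundedness of the left-tail contribution $R'_n$ is genuine, which follows from \eqref{eq:rwre-drift} exactly as in the argument bounding $R_n$ in Section~\ref{sec:RWRE}. The fact that the same constant $\theta$ governing the Fréchet limit of the stationary b.p.i.r.e.\ also governs the limit for the non-stationary sequence $(L_i)$ starting at $L_1 = 1$ is built into Corollary~\ref{cor:Max}, which allows an arbitrary initial distribution.
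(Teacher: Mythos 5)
Your argument is correct and follows essentially the same route as the paper, whose (brief) justification preceding the corollary is exactly this: identify $V^n_k=2L^n_{n-k}-1$ for $0\le k\le n-1$, note that $\max_{k<0}V^n_k$ is a.s.\ bounded, and apply Corollary~\ref{cor:Max} to the b.p.i.r.e.\ $(L_i)$ started at $1$, which is covered since that corollary allows an arbitrary initial distribution. Your write-up merely makes the bookkeeping explicit, including the verification of \eqref{eq:main-ass} via Remark~\ref{rem:PoiGeo}.
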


Recall that by Remark~\ref{rem:ThetaQ}, $\theta$ in either case can be obtained as 
\begin{equation*}
\theta  
= \p \bigg(E_0 + \sup_{t>0} \dsum_{i=1}^t \log ({\xi'_i}/{\xi_i}) 
\leq 0 \bigg)
\,,
\end{equation*}
where $E_0$ stands for an exponential random variable with parameter 
$\kappa$ independent of the environment sequence $(\xi_j)$.

\bigskip

\noindent \textbf{Acknowledgement.}
We are grateful to Dariusz Buraczewski for discussion on the problem and pointing 
out reference \cite{Afan2001}. 
This work is in part financed within the Croatian-Swiss Research Program of the Croatian Science Foundation and the Swiss National Science Foundation -- grant CSRP 018-01-180549.
P\'{e}ter Kevei is supported by the
J\'{a}nos Bolyai Research Scholarship of the Hungarian Academy of Sciences, by
the NKFIH grant FK124141, and by the EU-funded Hungarian grant
EFOP-3.6.1-16-2016-00008.

\end{document}